\newtheorem{theorem}{Theorem}[section]
\newtheorem{lemma}[theorem]{Lemma}
\newtheorem{proposition}[theorem]{Proposition}
\theoremstyle{definition}
\newtheorem{definition}[theorem]{Definition}
\theoremstyle{remark}
\newtheorem{remark}[theorem]{Remark}
\numberwithin{equation}{section}
\begin{document}

\title[Left multipliers of reproducing kernel Hilbert $C^*$-modules]{Left multipliers of reproducing kernel Hilbert $C^*$-modules and the Papadakis theorem}

\author[M. Ghaemi, V. M. Manuilov, M. S. Moslehian]{Mostafa Ghaemi$^1$, Vladimir M. Manuilov$^2$, \MakeLowercase{and} Mohammad Sal Moslehian$^3$}

\address{$^1$Department of Pure Mathematics, Ferdowsi University
of Mashhad, P. O. Box 1159, Mashhad 91775, Iran.}
\email{mofigh072@gmail.com}

\address{$^3$Moscow Center for Fundamental and Applied Mathematics, and Department of Mechanics and Mathematics, Moscow State University, Moscow, 119991, Russia.} 
\email{manuilov@mech.math.msu.su}

\address{$^3$Department of Pure Mathematics, Center of Excellence in Analysis on Algebraic Structures (CEAAS), Ferdowsi University of Mashhad, P. O. Box 1159, Mashhad 91775, Iran.}
\email{moslehian@um.ac.ir; moslehian@yahoo.com}

\date{}
\renewcommand{\subjclassname}{\textup{2020} Mathematics Subject Classification}
\subjclass[]{46E22; 47A56; 47L08; 46L05.}

\keywords{Reproducing kernel Hilbert $C^*$-module; Papadakis theorem; frame; positive definite kernel; Left multiplier.}

\begin{abstract}
We give a modified definition of a reproducing kernel Hilbert $C^*$-module (shortly, $RKHC^*M$) without using the condition of self-duality and discuss some related aspects; in particular, an interpolation theorem is presented. We investigate the exterior tensor product of $RKHC^*M$s and find their reproducing kernel. In addition, we deal with left multipliers of $RKHC^*M$s. Under some mild conditions, it is shown that one can make a new $RKHC^*M$ via a left multiplier. Moreover, we introduce the Berezin transform of an operator in the context of $RKHC^*M$s and construct a unital subalgebra of the unital $C^*$-algebra consisting of adjointable maps on an $RKHC^*M$ and show that it is closed with respect to a certain topology. Finally, the Papadakis theorem is extended to the setting of $RKHC^*M$, and in order for the multiplication of two specific functions to be in the Papadakis $RKHC^*M$, some conditions are explored.
\end{abstract}

\maketitle

\section{Introduction}

Hilbert $C^*$-modules are generalization of Hilbert spaces by allowing the inner product to take its values in a $C^*$-algebra instead of the complex numbers. At the same time, they are extensions of $C^*$-algebras. Indeed, a $C^*$-algebra $\mathfrak{A}$ is a Hilbert $\mathfrak{A}$-module if we define $\langle a,b\rangle=a^*b$ $(a,b\in \mathfrak{A})$. For Hilbert $C^*$-modules $\mathcal{E}$ and $\mathcal{F}$, the set of all adjointable maps from $\mathcal{E}$ to $\mathcal{F}$ is denoted by $L(\mathcal{E},\mathcal{F})$, and $L(\mathcal{E})$ stands for the unital $C^*$-algebra $L(\mathcal{E},\mathcal{E})$. We assume that $\mathfrak{A}\otimes_{alg}\mathfrak{B}$ and $\mathfrak{A}\otimes_*\mathfrak{B}$ denote, respectively, the algebraic tensor product and an arbitrary fixed $C^*$-tensor product of the $C^*$-algebras $\mathfrak{A}$ and $\mathfrak{B}$ with the corresponding $C^*$-tensor norm $\|\cdot\|_*$. For more details on the general theory of $C^*$-algebras, the reader is referred to \cite{Mor}. Let $\mathcal{E}_1$ and $\mathcal{E}_2$ be Hilbert $C^*$-modules over $C^*$-algebras $\mathfrak{A}$ and $\mathfrak{B}$, respectively. The algebraic tensor product and the exterior tensor product of $\mathcal{E}_1$ and $\mathcal{E}_2$ are denoted by $\mathcal{E}_1\otimes_{alg}\mathcal{E}_2$ and $\mathcal{E}_1\otimes \mathcal{E}_2$, respectively. Indeed, $\mathcal{E}_1\otimes \mathcal{E}_2$ is a Hilbert $\mathfrak{A}\otimes_*\mathfrak{B}$-module equipped with the following $\mathfrak{A}\otimes_*\mathfrak{B}$-valued inner product \cite{Lance}:
\[
\langle x\otimes y, z\otimes w\rangle=\langle x,z\rangle \otimes \langle y,w\rangle\quad (x,z\in \mathcal{E}_1, y,w\in \mathcal{E}_2).
\]
Let $\mathcal{E}$ be a Hilbert $C^*$-module over a $C^*$-algebra $\mathfrak{A}$. The set of all bounded $\mathfrak{A}$-linear maps from $\mathcal{E}$ to $\mathfrak{A}$ is denoted by $\mathcal{E}'$. The space $\mathcal{E}$ can be embedded in $\mathcal{E}'$ via $\widehat\ :\mathcal{E}\to \mathcal{E}'$ defined by $x\mapsto \hat{x}$, where $\hat{x}(y)=\langle x,y\rangle\ (y\in \mathcal{E})$. A Hilbert $C^*$-module $\mathcal{E}$ is called self-dual if $\widehat{\mathcal{E}}=\mathcal{E}'$. We refer the reader to 
\cite{Lance,WO} for more details on the theory of Hilbert $C^*$-modules. 

Throughout this article, $S$ and $X$ stand for nonempty sets. We denote $C^*$-algebras by $\mathfrak{A}$ and $\mathfrak{B}$. By $\mathcal{Z}(\mathfrak{A})$ and $\mathfrak{A}^+$ we mean the center of the $C^*$-algebra $\mathfrak{A}$ and the set of all positive elements of $\mathfrak{A}$, respectively. When $\mathfrak{A}$ is unital, ${\rm Inv}(\mathfrak{A})$ stands for the set of all invertible elements of $\mathfrak{A}$. The $C^*$-algebra of all $n\times n$ matrices with entries in $\mathfrak{A}$ is presented by $\mathbb{M}_n(\mathfrak{A})$.

Aronszajn \cite{Aron} defined the concept of reproducing kernel Hilbert space (shortly, $RKHS$), and Schwartz \cite{119} developed the concept. This theory has many applications in integral equations, complex analysis, and so on; see \cite{12}. Indeed, an $RKHS$ $\mathcal{H}$ is a Hilbert space of $\mathbb{C}$-valued functions on a set $S$ such that, for all $s\in S$, the evaluation map $\delta_s:\mathcal{H}\to \mathbb{C}$ defined by $\delta_s(f)=f(s)$ is bounded. It follows from the Riesz representation theorem that, for every $s\in S$, there exists a unique element $k_s\in \mathcal{H}$ such that 
\[
\delta_s(f)=f(s)=\langle f,k_s\rangle\quad (f\in \mathcal{H}).
\]
Furthermore, the two-variable function $K:S\times S\to \mathbb{C}$ defined by $K(s,t)=k_t(s) (s,t\in S)$ is called a reproducing kernel for $\mathcal{H}$. A theorem due to Moore \cite[Theorem 2.14]{An} states that for a scalar-valued positive definite kernel, there is a unique $RKHS$ such that $K$ is its reproducing kernel. Indeed, there is a two-sided relation between scalar-valued positive definite kernels and $RKHS$s. For more information about reproducing kernel spaces we refer the interested reader to \cite{ALP, An, MAS} and references therein.

The Papadakis theorem \cite[Theorem 2.10]{An} shows that $\{f_s:s\in S\}$ is a Parseval frame for an $RKHS$ if and only if $K(x,y)=\sum_{s\in S}\overline{f_s(x)}f_s(y)$, where the series converges pointwise. In general, finding $k_s$ for every $s\in S$, and so $K$, is not easy, but the Papadakis theorem provides a useful benchmark. 

Although Hilbert $C^*$-modules generalize Hilbert spaces, some fundamental properties of Hilbert spaces are no longer valid in Hilbert $C^*$-modules in their full generality. For instance, not every bounded $\mathfrak{A}$-linear operator is adjointable. Thus in the theory of Hilbert $C^*$-modules, it is interesting to ask which results, similar to those for Hilbert spaces, can be proved probably under some conditions. Inspiring by some ideas in the Hilbert space setting \cite{An}, we extend some significant classical results to the setting of Hilbert $C^*$-modules.

The paper is organized as follows.\\
In the next section, we use some ideas of Szafraniec \cite{Szafraniec2} to give a modified definition of a reproducing kernel Hilbert $C^*$-module (shortly, $RKHC^*M$) due to Heo \cite{Heo} without using the condition of self-duality and discuss some related aspects. Such a lack of self-duality shows that our investigation is nontrivial and is not a straightforward generalization of the classical case of $RKHS$s. In the same section, the exterior tensor product of $RKHC^*M$s is investigated and an interpolation theorem is presented. Section 3 deals with left multipliers of $RKHC^*M$s. Under some mild conditions, we show that one can make a new $RKHC^*M$ by a left multiplier. In addition, we introduce the Berezin transform of an operator in the context of $RKHC^*M$s and construct a unital subalgebra of the unital $C^*$-algebra consisting of adjointable maps on an $RKHC^*M$ and show that it is closed with respect to a certain topology. In section 4, we extend the Papadakis theorem to the setting of $RKHC^*M$ and find some conditions, in order for the multiplication of two specific functions to be in the Papadakis $RKHC^*M$.


\section{A modified definition of $RKHC^*M$}

We denote by $\mathbb{F}(S,\mathfrak{A})$ the set of all $\mathfrak{A}$-valued functions on $S$. It is clear that $\mathbb{F}(S,\mathfrak{A})$ is a right $\mathfrak{A}$-module equipped with the ordinary operations. 

\begin{definition}[see \cite{Heo}]\label{msm}
By a kernel on $S$ we mean a map $K:S\times S\to \mathfrak{A}$. A kernel $K$ is called positive definite whenever the matrix $\big(K(s_i,s_j)\big)_{i,j=1}^n\in\mathbb{M}_n(\mathfrak{A})$ is positive or, equivalently,
\[
\sum_{i,j=1}^n a_i^*K(s_i,s_j)a_j\geq 0,
\]
for all $n\in \mathbb{N}$, $s_1, s_2, \ldots, s_n\in S$, and $a_1, a_2, \ldots, a_n\in \mathfrak{A}$. Then, $K(s,t) = K(t,s)^*$ for all $s, t\in S$. We say that a kernel $K$ is strictly positive whenever $\big(K(s_i,s_j)\big)_{i,j=1}^n$ is positive and invertible in $\mathbb{M}_n(\mathfrak{A})$.
\end{definition}

The following definition of an $RKHC^*M$ differs from that of \cite{Heo} and is inspired by \cite{Szafraniec2}.

\begin{definition}\label{mnm2}
A right $\mathfrak{A}$-submodule $\mathcal{E}$ of $\mathbb{F}(S,\mathfrak{A})$ is called a reproducing kernel Hilbert $C^*$-module if it satisfies the following conditions:
 \begin{itemize}
 \item[(i)]
 $\mathcal{E}$ is a Hilbert $C^*$-module over $\mathfrak{A}$.
 \item[(ii)]
 For every $s\in S$, there exists $k_s\in \mathcal{E}$ such that the evaluation map $\delta_s:\mathcal{E}\to \mathfrak{A}$ at $s\in S$ satisfies $\delta_s(f)=f(s)=\langle k_s,f\rangle$ for all $f\in \mathcal{E}$.
\item[(iii)]
The $\mathfrak{A}$-linear span of $\{k_s: s\in S\}$ is dense in $\mathcal{E}$.
 \end{itemize}
 The element $k_s$ is called the reproducing kernel for the point $s\in S$. 
 \end{definition}
 The corresponding reproducing kernel $K:S\times S\to \mathfrak{A}$ is given by $K(s,t)=\langle k_s, k_t\rangle$ for every $s,t\in S$; see \cite{MOS}. In what follows, we use the following theorems.
 \begin{theorem}\cite[proposition 3.1]{Heo}
 Let $S$, $\mathfrak{A}$, and $\mathcal{E}$ be as above. Then 
 \begin{itemize}
 \item[(i)]
 the kernel $K$ is positive definite;
 \item[(ii)]
 $K(s,s)\in \mathfrak{A}^+$ for every $s\in S$;
 \item[(iii)]
 $\|K(s,t)\|^2\leq\|K(s,s)\|\|K(t,t)\|$ for all $s,t\in S$.
 \end{itemize}
 \end{theorem}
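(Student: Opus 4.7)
The plan is to verify each of the three claims directly from $K(s,t)=\langle k_s,k_t\rangle$ together with the standard axioms of a Hilbert $C^*$-module inner product (conjugate-$\mathfrak{A}$-linearity in the first slot, $\mathfrak{A}$-linearity in the second, positivity of $\langle x,x\rangle$, and the $C^*$-module Cauchy--Schwarz inequality $\|\langle x,y\rangle\|^{2}\le \|\langle x,x\rangle\|\,\|\langle y,y\rangle\|$, both of which are standard facts from \cite{Lance}). So no deep machinery is required; the work is in choosing the right element of $\mathcal{E}$ at each step.

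For part (i), given $n\in\mathbb{N}$, $s_1,\dots,s_n\in S$ and $a_1,\dots,a_n\in\mathfrak{A}$, I would form the single vector $f=\sum_{i=1}^{n}k_{s_i}a_i\in\mathcal{E}$ (this uses that $\mathcal{E}$ is a right $\mathfrak{A}$-module containing each $k_{s_i}$). Expanding $\langle f,f\rangle$ with the sesquilinearity of the inner product gives
\[
\langle f,f\rangle=\sum_{i,j=1}^{n}a_i^{*}\langle k_{s_i},k_{s_j}\rangle a_j=\sum_{i,j=1}^{n}a_i^{*}K(s_i,s_j)a_j,
\]
and the left-hand side is a positive element of $\mathfrak{A}$ by axiom, proving positive definiteness in the sense of Definition~\ref{msm}. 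As the excerpt notes, positive definiteness automatically gives the symmetry $K(s,t)=K(t,s)^{*}$, so nothing extra is needed there.

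Part (ii) is the special case $n=1$, $a_1=1$ (or, more intrinsically, just the observation that $K(s,s)=\langle k_s,k_s\rangle$ lies in $\mathfrak{A}^{+}$ by the positivity axiom of the $\mathfrak{A}$-valued inner product). For part (iii), I would apply the $C^*$-module Cauchy--Schwarz inequality to the pair $x=k_s$, $y=k_t$, obtaining
\[
\|K(s,t)\|^{2}=\|\langle k_s,k_t\rangle\|^{2}\le \|\langle k_s,k_s\rangle\|\,\|\langle k_t,k_t\rangle\|=\|K(s,s)\|\,\|K(t,t)\|.
\]

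There is no serious obstacle: each step is a one-line consequence of the $RKHC^{*}M$ axioms combined with a standard property of Hilbert $C^*$-module inner products. The only point at which one must be slightly careful is in part (i), where one has to remember that the conjugate linearity is in the first slot of $\langle\cdot,\cdot\rangle$ (so that $\langle k_{s_i}a_i,k_{s_j}a_j\rangle=a_i^{*}\langle k_{s_i},k_{s_j}\rangle a_j$); choosing the sign convention of the inner product consistently with Definition~\ref{mnm2} is what makes the quadratic form come out in the correct order $a_i^{*}K(s_i,s_j)a_j$.
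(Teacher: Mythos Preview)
Your proposal is correct and is exactly the standard argument; the paper itself does not supply a proof but merely cites \cite[Proposition~3.1]{Heo}, whose proof proceeds in the same way. One small remark: in (ii) the shortcut ``$n=1$, $a_1=1$'' presupposes a unit in $\mathfrak{A}$, but your parenthetical alternative $K(s,s)=\langle k_s,k_s\rangle\in\mathfrak{A}^{+}$ covers the general case, so nothing is missing.
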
 
Now let $K:S\times S\to \mathfrak{A}$ be a positive definite kernel. For every $s\in S$, consider the function $k_s:S\to\mathfrak{A}$ by $k_s(t)=K(t,s)$. Assume that $\mathcal{E}_0$ is the right $\mathfrak{A}$-module of $\mathfrak{A}$-valued functions on $S$ generated by $\{k_s:s\in S\}$. Now, setting
\[
\left\langle \sum_{i=1}^mk_{s_i}a_i, \sum_{j=1}^nk_{t_j}b_j\right\rangle:=\sum_{i=1}^m\sum_{j=1}^na_i^*K(s_i,t_j)b_j,
\]
where $m,n\in \mathbb{N}$, $a_1, a_2, \ldots, a_n, b_1, b_2, \ldots, b_n\in \mathfrak{A}$, and $s_1, s_2, \ldots, s_m, t_1, t_2, \ldots, t_n\in S$, we make $\mathcal{E}_0$ into a pre-Hilbert $\mathfrak{A}$-module. Suppose that $\mathcal{E}$ denotes its completion. In addition, $\mathcal{E}$ can be considered as a Hilbert $\mathfrak{A}$-module of $\mathfrak{A}$-valued functions on $S$, and evidently $\langle k_s,k_t\rangle=K(s,t)$ for all $s,t\in S$, which means that $K$ is its reproducing kernel. The above construction is given by Heo \cite{Heo} and holds true for the definition \ref{mnm2}. It entails the following result.
 
\begin{theorem}\cite[Theorem 3.2]{Heo}\label{0121}
If $K:S\times S\to \mathfrak{A}$ is positive definite, then there exists a unique Hilbert $\mathfrak{A}$-module consisting of $\mathfrak{A}$-valued functions on $S$ such that $K$ is its reproducing kernel. 
\end{theorem}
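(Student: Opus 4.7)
The plan is to verify that the construction already sketched in the paragraph preceding the statement actually produces a Hilbert $\mathfrak{A}$-module of $\mathfrak{A}$-valued functions on $S$ whose reproducing kernel is $K$, and then to establish uniqueness. First, I would fix the notation $k_{s}(t):=K(t,s)$, let $\mathcal{E}_{0}$ be the right $\mathfrak{A}$-module generated by $\{k_{s}:s\in S\}$, and declare
\[
\left\langle \sum_{i}k_{s_{i}}a_{i},\sum_{j}k_{t_{j}}b_{j}\right\rangle :=\sum_{i,j}a_{i}^{*}K(s_{i},t_{j})b_{j}.
\]
The immediate checks are sesquilinearity and $\mathfrak{A}$-linearity in the second slot, which are formal, and well-definedness, i.e.\ independence of the particular representation of an element as a finite sum. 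Well-definedness, together with positivity of $\langle f,f\rangle$, both follow from the hypothesis that $(K(s_{i},s_{j}))_{i,j=1}^{n}$ is positive in $\mathbb{M}_{n}(\mathfrak{A})$ for every choice of $s_{1},\dots,s_{n}\in S$; this is the usual GNS-type computation.

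Next I would observe that this sesquilinear form is already a genuine inner product on $\mathcal{E}_{0}$, not merely a semi-inner product, because the reproducing identity $f(s)=\langle k_{s},f\rangle$ holds by construction for every $f\in\mathcal{E}_{0}$, so $\langle f,f\rangle=0$ forces, via the Cauchy--Schwarz inequality for $\mathfrak{A}$-valued inner products, $f(s)^{*}f(s)\le\|K(s,s)\|\,\langle f,f\rangle=0$ for all $s$, i.e.\ $f\equiv0$ as a function. Having a genuine pre-Hilbert $\mathfrak{A}$-module, I complete it in the norm $\|f\|=\|\langle f,f\rangle\|^{1/2}$ to obtain a Hilbert $\mathfrak{A}$-module $\mathcal{E}$.

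The step I expect to be the main technical point is realising this abstract completion as a space of $\mathfrak{A}$-valued functions on $S$, since a priori the completion only lives in an abstract Hausdorff completion. The trick is that, by Cauchy--Schwarz applied to $k_{s}$, any Cauchy sequence $\{f_{n}\}\subset\mathcal{E}_{0}$ satisfies
\[
\|f_{n}(s)-f_{m}(s)\|^{2}=\|\langle k_{s},f_{n}-f_{m}\rangle\|^{2}\le\|K(s,s)\|\,\|f_{n}-f_{m}\|^{2},
\]
so $\{f_{n}(s)\}$ is Cauchy in $\mathfrak{A}$ for every $s$. This provides a canonical pointwise limit function, and a standard argument shows the resulting map from the abstract completion into $\mathbb{F}(S,\mathfrak{A})$ is an injective module map whose image is a Hilbert $\mathfrak{A}$-submodule of $\mathbb{F}(S,\mathfrak{A})$ satisfying Definition \ref{mnm2}. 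By construction $\langle k_{s},k_{t}\rangle=K(s,t)$, so $K$ is the reproducing kernel.

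For uniqueness, suppose $\mathcal{E}_{1}$ and $\mathcal{E}_{2}$ are two Hilbert $\mathfrak{A}$-modules of $\mathfrak{A}$-valued functions on $S$ each having $K$ as reproducing kernel, with reproducing elements $k_{s}^{(1)}$ and $k_{s}^{(2)}$ respectively. The inner product relations $\langle k_{s}^{(1)},k_{t}^{(1)}\rangle=K(s,t)=\langle k_{s}^{(2)},k_{t}^{(2)}\rangle$ make the assignment $k_{s}^{(1)}\mapsto k_{s}^{(2)}$ extend $\mathfrak{A}$-linearly to an isometry between the $\mathfrak{A}$-linear spans, which by condition (iii) of Definition \ref{mnm2} are dense, so it extends to a unitary $\mathfrak{A}$-module isomorphism $U:\mathcal{E}_{1}\to\mathcal{E}_{2}$. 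Finally, $(Uf)(s)=\langle k_{s}^{(2)},Uf\rangle=\langle U k_{s}^{(1)},Uf\rangle=\langle k_{s}^{(1)},f\rangle=f(s)$, so $U$ is the identity on functions, identifying $\mathcal{E}_{1}=\mathcal{E}_{2}$.
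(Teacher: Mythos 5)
Your proposal is correct and follows essentially the same route as the paper, which (citing Heo's Theorem 3.2) relies on exactly the construction you flesh out: the pre-Hilbert module $\mathcal{E}_0$ spanned by the $k_s$, its completion realized inside $\mathbb{F}(S,\mathfrak{A})$ via the evaluation bound $\|f(s)\|^2\leq\|K(s,s)\|\,\|f\|^2$, and the standard unitary identification for uniqueness. The details you add — non-degeneracy via the module Cauchy--Schwarz inequality, injectivity of the map from the abstract completion into $\mathbb{F}(S,\mathfrak{A})$, and the explicit appeal to density condition (iii) of Definition \ref{mnm2} in the uniqueness step (which is genuinely needed here, since in Hilbert $C^*$-modules a trivial orthogonal complement does not imply density) — are all sound.
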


Heo \cite{Heo} introduced the concept of an $RKHC^*M$ and transferred some of the classical theorems to the setting of $RKHC^*M$s. Regretfully, the requirement of self-duality in \cite{Heo} is too strong to hold in nontrivial examples. For example, it is known that finitely generated Hilbert $C^*$-modules and Hilbert $C^*$-modules over finite-dimensional $C^*$-algebras are self-dual; see \cite{FRA} and references therein.

Here, we explain why $RKHC^*M$s are almost never self-dual; see \cite{FRA}. For simplicity, we assume $S=\mathbb N$. Then $\xi=\sum_s k_s a_s$ can be viewed as a sequence $\xi=(a_s)_{s\in\mathbb N}$, where $a_s\in\mathfrak{A}$, $s\in\mathbb N$. The inner product on $\mathcal{E}$ is given by 
\[\langle \xi,\eta\rangle=\sum_{s,t\in\mathbb N}a_s^*K(s,t)b_t.\]
A sequence $\xi$ lies in $\mathcal{E}$ if and only if the series $\sum_{s,t\in\mathbb N}a_s^*K(s,t)a_t$ is norm-convergent. Let $f=(f_s)$ be a sequence such that all partial sums $\sum_{s,t\in\mathbb N}f_s^*K(s,t)f_t$ are uniformly bounded. Then the formula $f(\xi)=\sum_{s,t\in\mathbb N}f_s^*K(s,t)a_t$ is well-defined (i.e., the series is norm-convergent) and gives a bounded $\mathfrak{A}$-linear map from $\mathcal{E}$ to $\mathfrak{A}$, that is, $f\in\mathcal{E}'$. It is easy to see that the two conditions, norm convergence and uniform boundedness, are different in most cases. For simplicity, assume that $\mathfrak{A}$ is commutative, and so is of the form $C_0(X)$ for a locally compact Hausdorff space $X$. Then $K(s,t)$s are functions on $X$. Suppose that $\cup_{s\in\mathbb N}\operatorname{supp} K(s,s)$ is infinite. Then one can find functions $f_s$, $s\in\mathbb N$, on $X$ such that $\|K(s,s)f_s\|=1$ (where $\|\cdot\|$ is the sup-norm on $X$) and $f_sf_t=0$ when $s\neq t$. Then, obviously, $(f_s)_{s\in\mathbb N}\in\mathcal{E}'\setminus\mathcal{E}$. Note that since $K$ is positive definite, the condition that $\cup_{s\in\mathbb N}\operatorname{supp} K(s,s)$ is finite, implies that $\operatorname{supp}K(s,t)$ is finite as well.\\

While every closed subspace of a Hilbert space is orthogonally complemented, it is well known that submodules in Hilbert $C^*$-modules are often not orthogonally complemented. Furthermore, it is shown in \cite{MAG} that if there exists a full Hilbert $\mathfrak{A}$-module in
which every closed submodule is orthogonally complemented, then $\mathfrak{A}$ is $*$-isomorphic
to a $C^*$-algebra of (not necessarily all) compact operators on some Hilbert spaces; see \cite{FP} and references therein. The following theorem provides a class of orthogonally complemented submodules being $RKHC^*M$s. To achieve the next result, we need a lemma.

\begin{lemma}\label{mos2}
Let $\mathcal{E}$ be an $RKHC^*M$ on a set $S$ with the kernel $K$. Then every orthogonally complemented submodule $\mathcal{E}_0$ of $\mathcal{E}$ can be endowed with an $RKHC^*M$ on $S$.
\end{lemma}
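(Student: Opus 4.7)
The plan is to use the orthogonal projection onto $\mathcal{E}_0$ to transport the reproducing system $\{k_s\}_{s\in S}$ of $\mathcal{E}$ down to $\mathcal{E}_0$, then verify each of the three defining conditions of Definition \ref{mnm2}. Since $\mathcal{E}_0$ is orthogonally complemented, there exists an adjointable idempotent $P\in L(\mathcal{E})$ with $P=P^*=P^2$ and $P\mathcal{E}=\mathcal{E}_0$. As $\mathcal{E}_0\subseteq\mathcal{E}\subseteq \mathbb{F}(S,\mathfrak{A})$, condition (i) of Definition \ref{mnm2} is automatic: a closed submodule of a Hilbert $C^*$-module is itself a Hilbert $C^*$-module, and its elements are $\mathfrak{A}$-valued functions on $S$.

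For condition (ii), I would set $k_s^0:=Pk_s\in\mathcal{E}_0$ for every $s\in S$. Then for any $f\in\mathcal{E}_0$, using $Pf=f$ and the self-adjointness of $P$, I compute
\[
\langle k_s^0,f\rangle=\langle Pk_s,f\rangle=\langle k_s,P^*f\rangle=\langle k_s,Pf\rangle=\langle k_s,f\rangle=f(s),
\]
where the last equality is the reproducing property in $\mathcal{E}$. This gives the evaluation identity $\delta_s(f)=\langle k_s^0,f\rangle$ inside $\mathcal{E}_0$.

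For condition (iii), I would invoke the continuity (in fact, boundedness) of the adjointable map $P$. Given $f\in\mathcal{E}_0$, the hypothesis on $\mathcal{E}$ produces a net $\xi_\alpha=\sum_{i}k_{s_i^\alpha}a_i^\alpha$ in the $\mathfrak{A}$-linear span of $\{k_s:s\in S\}$ with $\xi_\alpha\to f$. Applying $P$ and using $\mathfrak{A}$-linearity yields
\[
P\xi_\alpha=\sum_{i}(Pk_{s_i^\alpha})a_i^\alpha=\sum_{i}k_{s_i^\alpha}^0 a_i^\alpha\ \longrightarrow\ Pf=f,
\]
so $f$ lies in the closure of the $\mathfrak{A}$-linear span of $\{k_s^0:s\in S\}$.

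There is no real obstacle here; the only point that needs care is that the evaluation of an element of $\mathcal{E}_0$ at $s\in S$ is genuinely inherited from $\mathcal{E}$, so that $k_s^0$ really reproduces the pointwise value of $f\in\mathcal{E}_0$. This is built into the setup because $\mathcal{E}_0$ sits inside $\mathbb{F}(S,\mathfrak{A})$ and the identity $\delta_s|_{\mathcal{E}_0}=\delta_s$ is tautological; the computation in the second paragraph then confirms the reproducing property. The reproducing kernel of $\mathcal{E}_0$ is therefore $K_0(s,t)=\langle k_s^0,k_t^0\rangle=\langle Pk_s,Pk_t\rangle=\langle k_s,Pk_t\rangle=(Pk_t)(s)$, which could be recorded as a by-product if useful later.
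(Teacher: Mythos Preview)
Your proof is correct and follows essentially the same route as the paper: define $k_s^0=Pk_s$ via the orthogonal projection and verify the reproducing identity through $\langle Pk_s,f\rangle=\langle k_s,Pf\rangle=\langle k_s,f\rangle=f(s)$, recording $K_0(s,t)=\langle Pk_s,Pk_t\rangle$. In fact you are slightly more thorough, since you explicitly check the density condition (iii) by pushing approximating combinations through $P$, whereas the paper leaves this step implicit.
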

\begin{proof}
Let $P: \mathcal{E}\to \mathcal{E}_0$ be the orthogonal projection onto $\mathcal{E}_0$. If $k_s$ is the reproducing kernel at the point $s$ in $\mathcal{E}$, then $P(k_s)$ evidently is the reproducing kernel for the point $s$ in $\mathcal{E}_0$ satisfying the conditions of Definition \ref{msm}. Note that
$$
f(s)=\langle k_s,f\rangle=\langle k_s,P(f)\rangle=\langle P(k_s),f\rangle, 
$$
for all $f\in\mathcal{E}_0$. Thus $\mathcal{E}_0$ is an $RKHC^*M$ with the reproducing kernel $K_0(s,t)=\langle P(k_s),P(k_t)\rangle$.
\end{proof}
\begin{theorem}
Suppose that $K(s_0,s_0)$ is invertible for some $s_0\in S$. Then the following properties hold:
\begin{itemize}
\item[(i)] The submodule $\mathcal{E}_0=\{f\in\mathcal{E}:f(s_0)=0\}$ is orthogonally complemented in $\mathcal{E}$.
\item[(ii)] $\mathcal{E}_0$ is the $RKHC^*M$ with the reproducing kernel 
\[K_0(s,t)=K(s,t)-K(s,s_0)K(s_0,s_0)^{-1}K(s_0,t).\]
\end{itemize}
\end{theorem}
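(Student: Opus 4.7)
The plan is to exhibit the orthogonal complement of $\mathcal{E}_0$ explicitly as the cyclic $\mathfrak{A}$-submodule $k_{s_0}\mathfrak{A}$, exploiting the invertibility of $K(s_0,s_0)=\langle k_{s_0},k_{s_0}\rangle$ to write down the projection onto it in closed form, and then apply Lemma \ref{mos2} to read off the reproducing kernel of $\mathcal{E}_0$.

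For part (i), I would define the map $P:\mathcal{E}\to\mathcal{E}$ by
\[
Pf = k_{s_0}\,K(s_0,s_0)^{-1}\,f(s_0).
\]
Since $f(s_0)=\langle k_{s_0},f\rangle$, the map $P$ is $\mathfrak{A}$-linear and bounded. Using the self-adjointness of $K(s_0,s_0)^{-1}$ together with the identities $\langle k_{s_0},f\rangle=f(s_0)$ and $\langle f,k_{s_0}\rangle=f(s_0)^*$, a direct computation gives
\[
\langle Pf,g\rangle = f(s_0)^*K(s_0,s_0)^{-1}g(s_0)=\langle f,Pg\rangle,
\]
so $P$ is adjointable with $P=P^*$. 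Evaluating $Pf$ at $s_0$ yields $K(s_0,s_0)K(s_0,s_0)^{-1}f(s_0)=f(s_0)$, whence $P^2=P$. Therefore $Q:=I-P$ is an orthogonal projection whose range equals $\{f\in\mathcal{E}:f(s_0)=0\}=\mathcal{E}_0$, establishing (i).

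For part (ii), Lemma \ref{mos2} tells me that the reproducing kernel of $\mathcal{E}_0$ at the point $s$ is
\[
Q(k_s)=k_s-k_{s_0}\,K(s_0,s_0)^{-1}\,K(s_0,s),
\]
since $k_s(s_0)=\langle k_{s_0},k_s\rangle=K(s_0,s)$. Then $K_0(s,t)=\langle Q(k_s),Q(k_t)\rangle$, and expanding using sesquilinearity over $\mathfrak{A}$, the identity $\langle k_a,k_b\rangle=K(a,b)$, the equality $K(s_0,s)^*=K(s,s_0)$, and the self-adjointness of $K(s_0,s_0)^{-1}$ produces four terms: $K(s,t)$, two equal cross terms $-K(s,s_0)K(s_0,s_0)^{-1}K(s_0,t)$, and a quadratic term that collapses via $K(s_0,s_0)^{-1}K(s_0,s_0)K(s_0,s_0)^{-1}=K(s_0,s_0)^{-1}$ to $+K(s,s_0)K(s_0,s_0)^{-1}K(s_0,t)$. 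Collecting yields the asserted formula.

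The main (and really the only) obstacle is the bookkeeping of adjoints and right-module scalars when verifying $P^*=P$ and when expanding $K_0$; nothing here is conceptually deep, but I would take care that the rightward $\mathfrak{A}$-action and the identities $K(a,b)^*=K(b,a)$ and $K(s_0,s_0)^*=K(s_0,s_0)$ are applied consistently so that the cross terms and the quadratic term coalesce cleanly.
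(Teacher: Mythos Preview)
Your proposal is correct and follows essentially the same approach as the paper: both define the rank-one projection $P(f)=k_{s_0}K(s_0,s_0)^{-1}\langle k_{s_0},f\rangle$ onto $k_{s_0}\mathfrak{A}$, identify $\mathcal{E}_0=(k_{s_0}\mathfrak{A})^\perp$, and then invoke Lemma~\ref{mos2} to compute $K_0$. The only cosmetic difference is that the paper shortcuts the kernel computation via $\langle Qk_s,Qk_t\rangle=\langle k_s,Qk_t\rangle$ (two terms) rather than your four-term expansion, but this is the same argument.
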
 
\begin{proof}
\begin{itemize}
\item[(i)] Note that $k_{s_0}$ satisfying $\langle k_{s_0},k_{s_0}\rangle=K(s_0,s_0)$ is invertible. Hence $P(f)=k_{s_0}\langle k_{s_0},k_{s_0}\rangle^{-1}\langle k_{s_0},f\rangle$ is the orthogonal projection onto the $\mathfrak{A}$-linear span of $k_{s_0}$. Then $\mathcal{E}=k_{s_0}\mathfrak{A}\oplus (k_{s_0}\mathfrak{A})^\perp$. The condition $f\perp k_{s_0}$ can be written as $0=\langle k_{s_0},f\rangle=f(s_0)$. Hence $(k_{s_0}\mathfrak{A})^\perp=\mathcal{E}_0$. 
\item[(ii)] The reproducing kernel for $\mathcal{E}_0$ is given by
\begin{eqnarray*}
\langle P(k_s), P(k_t)\rangle&=&\langle k_s,P(k_t)\rangle=\langle k_s,k_t-k_{s_0}\langle k_{s_0},k_{s_0}\rangle^{-1}\langle k_{s_0},k_t\rangle\rangle\\
&=&K(s,t)-K(s,s_0)K(s_0,s_0)^{-1}K(s_0,t)\\
&=&K_0(s,t).
\end{eqnarray*}
\end{itemize}
\end{proof}
The next result provides an interpolation theorem. We should notify that a finitely generated $\mathfrak{A}$-submodule is not necessarily closed. For example, when $\mathfrak{A}=C([0,1])=\mathcal{E}$, the submodule singly generated by the function $f(x)=x$ is not closed, and its closure is not finitely generated.

\begin{theorem}\label{msm2}
Let $\mathcal{E}$ be an $RKHC^*M$ on $S$ with the reproducing kernel $K$. Let $F=\{s_1, \ldots, s_n\}$ be a subset of distinct elements of $S$ such that the $C^*$-submodule generated by $k_{s_1},\ldots,k_{s_n}$ is closed in $\mathcal E$, and let $a_1, \ldots, a_n\in\mathfrak{A}$. Then there is a function $f\in \mathcal{E}$ of minimal norm such that $f(s_i)=a_i$ for all $1\leq i\leq n$ if and only if $(a_1, \ldots, a_n)^t$ is in the range of the matrix $\big(K(s_j,s_i)\big)\in \mathbb{M}_n(\mathfrak{A})$.
\end{theorem}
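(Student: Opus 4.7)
The plan is to recast the interpolation problem as an equation involving an adjointable $\mathfrak A$-linear operator. Define $T\colon \mathfrak A^n\to \mathcal E$ by $T(b_1,\ldots,b_n)=\sum_{j=1}^n k_{s_j}b_j$. Using the reproducing identity $f(s)=\langle k_s,f\rangle$, a direct computation shows that $T$ is adjointable with adjoint $T^*\colon \mathcal E\to \mathfrak A^n$ given by $T^*(f)=(f(s_1),\ldots,f(s_n))^t$. Consequently, $T^*T$ on $\mathfrak A^n$ is multiplication by the Gram matrix $G=(K(s_i,s_j))_{i,j=1}^n$, which is the matrix appearing in the statement, and the interpolation condition $f(s_i)=a_i$ is equivalent to the equation $T^*f=\mathbf a$, where $\mathbf a=(a_1,\ldots,a_n)^t$.

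The hypothesis that the $\mathfrak A$-submodule generated by the $k_{s_j}$'s is closed amounts to saying that $\mathrm{Range}(T)=\mathcal M$ is closed. I will then invoke the standard Hilbert $C^*$-module result that an adjointable operator with closed range has orthogonally complemented range, obtaining the projection $P\colon \mathcal E\to \mathcal M$ and the decomposition $\mathcal E=\mathcal M\oplus \mathcal M^\perp$.

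For $(\Leftarrow)$, suppose $\mathbf a=G\mathbf b$ and set $f_0:=T\mathbf b=\sum_j k_{s_j}b_j$. Then $f_0(s_i)=\sum_j K(s_i,s_j)b_j=a_i$, so $f_0$ interpolates. For any other interpolant $f$, the difference $w:=f-f_0$ satisfies $\langle k_{s_i},w\rangle=w(s_i)=0$ for each $i$, hence $w\in \mathcal M^\perp$; in particular $w\perp f_0$. Therefore $\langle f,f\rangle=\langle f_0,f_0\rangle+\langle w,w\rangle$, and since both summands are positive in $\mathfrak A$, we conclude $\|f\|\geq \|f_0\|$, proving $f_0$ has minimal norm. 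For $(\Rightarrow)$, let $f$ be any interpolant. Then $Pf\in \mathcal M=\mathrm{Range}(T)$, so $Pf=T\mathbf b$ for some $\mathbf b\in \mathfrak A^n$. Using that $k_{s_i}\in \mathcal M$ and that $P$ is self-adjoint, $(Pf)(s_i)=\langle k_{s_i},Pf\rangle=\langle Pk_{s_i},f\rangle=\langle k_{s_i},f\rangle=a_i$. Hence $\mathbf a=T^*(Pf)=T^*T\mathbf b=G\mathbf b$, as required.

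The main obstacle I expect is establishing the orthogonal complementation of $\mathcal M$: this is exactly where the hypothesis that the generating submodule is closed is used, combined with the Hilbert $C^*$-module theorem that adjointable operators with closed range split their codomain orthogonally. Without this step, the minimizer is not guaranteed to lie in $\mathcal M$, and the Pythagorean identity for $\mathfrak A$-valued inner products cannot be applied. Once the complementation is in hand, the remainder of the proof is essentially a translation of the classical Hilbert space argument, with the only care being that the norm dominance $\|\langle f_0,f_0\rangle+\langle w,w\rangle\|\geq \|\langle f_0,f_0\rangle\|$ follows from the positivity of the summands.
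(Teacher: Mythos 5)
Your proposal is correct, and its skeleton is the same as the paper's: decompose $\mathcal E$ orthogonally along the submodule spanned by $k_{s_1},\ldots,k_{s_n}$, observe that the projected element interpolates the same values, and get minimality from positivity/contractivity. The genuine difference is how the key technical step --- orthogonal complementation of that submodule --- is obtained: the paper simply cites \cite[Lemma 2.3.7]{Man} for the closed, finitely generated submodule $\mathcal E_F$, whereas you introduce the adjointable operator $T\colon\mathfrak A^n\to\mathcal E$, $T\mathbf b=\sum_j k_{s_j}b_j$, note $T^*f=(f(s_1),\ldots,f(s_n))^t$ and $T^*T=G$, and invoke the closed-range theorem for adjointable operators. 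This buys you a more self-contained argument (it effectively proves the complementation the paper only quotes), and it makes the bookkeeping cleaner: the interpolation constraints become $T^*f=\mathbf a$, and the range condition on the Gram matrix appears intrinsically as $\mathbf a\in\operatorname{ran}(T^*T)$, which also explains why existence of \emph{any} interpolant already forces $\mathbf a\in\operatorname{ran}G$ (via $\mathbf a=T^*Pf=T^*T\mathbf b$). The paper's route is shorter at the cost of an external citation, and it gets minimality from $\|P_F(g)\|\le\|g\|$ rather than your Pythagorean identity $\langle f,f\rangle=\langle f_0,f_0\rangle+\langle w,w\rangle$; both are valid, your version additionally using that $0\le a\le b$ implies $\|a\|\le\|b\|$ in $\mathfrak A$. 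Two cosmetic points: you work with $G=(K(s_i,s_j))$ while the statement writes $(K(s_j,s_i))$, but the paper itself uses the two interchangeably in its proof, so this is consistent with the intended reading; and your identification of ``the $C^*$-submodule generated by the $k_{s_j}$ is closed'' with ``$\operatorname{ran}T$ is closed'' is exactly how the paper treats $\mathcal E_F$ (its elements are written as $\sum_j k_{s_j}b_j$), so no gap there.
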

\begin{proof}
Assume that $P_F$ is the orthogonal projection onto the orthogonally complemented submodule $\mathcal{E}_F$ finitely generated by $\{k_s: s\in F\}$. So that $ \mathcal{E}= \mathcal{E}_F\oplus \mathcal{E}_F^\perp$, see \cite[Lemma 2.3.7]{Man}. Let $f\in \mathcal{E}$ and let $P_F(f)=\sum_{j=1}^nk_{s_j}b_j\in \mathcal{E}_F$, where $b_j$'s are in $\mathfrak{A}$. Then $P_F(f)(s)=f(s)$ for all $s\in F$, since $f(s)=\langle k_s,f\rangle=0$ for all $s\in F$ if and only if $f\in \mathcal{E}_F^\perp$.

If there is a function $f\in \mathcal{E}$ such that $f(s_i)=a_i$ for all $1\leq i\leq n$, then 
\[a_i=f(s_i)=P_F(f)(s_i)=\langle k_{s_i},P_F(f)\rangle=\left\langle k_{s_i},\sum_{j=1}^nk_{s_j}b_j\right\rangle=\sum_{j=1}^nK(s_i,s_j)b_j.\]
Thus $(a_1, \ldots, a_n)^t=\big(K(s_j,s_j)\big)(b_1,\ldots, b_n)^t$ is in the range of the matrix $\big(K(s_j,s_i)\big)$.\\
In addition, if $g\in\mathcal{E}$ interpolates these points, then $(g-f)(s)=0$ for all $s\in S$. Hence $g-f\in\mathcal{E}_F^\perp$, whence $g=f+h$ with $h\in \mathcal{E}_F^\perp$. Therefore,
\[\|P_F(f)\|=\|P_F(f+h)\|\leq \|f+h\|=\|g\|.\]
Thus, $P_F(f)$ is the unique function of minimum norm that interpolates these values.

Conversely, if $(a_1, \ldots, a_n)^t$ is in the range of $\big(K(s_j,s_i)\big)$ and $(a_1, \ldots, a_n)^t=\big(K(s_i,s_j)\big)(b_1,\ldots, b_n)^t$ for some $b_1, \ldots, b_n\in \mathfrak{A}$, then 
\[a_i=\sum_{j=1}^nK(s_i,s_j)b_j=\left\langle k_{s_i},\sum_{j=1}^nk_{s_j}b_j\right\rangle.\]
Putting $f:=\sum_{j=1}^nk_{s_j}b_j\in \mathcal{E}_F$, we get $f=P_F(f)$ and $a_i=\langle k_{s_i}, f\rangle=f(s_i)$ for $1\leq i\leq n$.
\end{proof}
\begin{remark}
If $\overline{a}=(a_1, \ldots, a_n)^t$ and $\overline{b}=(b_1,\ldots, b_n)^t$ are in the Hilbert $C^*$-module $\mathfrak{A}^n$ with its natural inner product (see \cite{Man}) and $(a_1, \ldots, a_n)^t = \big(K(s_i,s_j)\big) (b_1,\ldots, b_n)^t$, then we can choose $f$ such that $\|f\|=\|\langle \overline{a},\overline{b}\rangle\|^{1/2}$. In fact, if $f:=\sum_{j=1}^nk_{s_j}b_j$, then
\begin{align*}
\|f\|^2&=\left\|\left\langle \sum_{i=1}^nk_{s_i}b_i, \sum_{j=1}^nk_{s_j}b_j\right\rangle\right\|=\left\|\sum_{1\leq i,j\leq n}b_i^*K(s_i, s_j)b_j\right\|\\
&=\left\|\left\langle (b_i)_i, \left(\sum_{j=1}^nK(s_i,s_j)b_j\right)_i\right\rangle\right\|=\|\langle \overline{b},\overline{a}\rangle\|.
\end{align*}
If $K$ is strictly positive, then $\overline{b}$ is uniquely defined by $\overline{a}$. Thus, from the arguments at the first part of the proof of the above theorem, there is a unique $f\in \mathcal{E}_F$ satisfying the conditions of Theorem \ref{msm2}.
\end{remark}

Now, we investigate the exterior tensor product of $RKHC^*M$s. Let $K_1:X\times X\to \mathfrak{A}$ and $K_2:S\times S\to \mathfrak{B}$ be positive definite kernels on sets $X$ and $S$, respectively. It follows from Theorem \ref{0121} that there are $RKHC^*M$ $\mathcal{E}_1$ and $\mathcal{E}_2$ over $\mathfrak{A}$ and $\mathfrak{B}$ consisting of $\mathfrak{A}$-valued functions on $X$ and $\mathfrak{B}$-valued functions on $S$, respectively. We define $K:(X\times S)\times (X\times S)\to \mathfrak{A}\otimes_*\mathfrak{B}$, where $\otimes_*$ denotes a fixed $C^*$-tensor product with the $C^*$-cross-norm $\|\cdot\|_*$ by 
\[
K((x,s),(y,t))=K_1(x,y)\otimes K_2(s,t),\qquad (x,s), (y,t)\in X\times S.
\] 
Let $\xi_i=\sum_{k=1}^n a^k_i\otimes b^k_i\in\mathfrak{A}\otimes_{alg}\mathfrak{B}$, $i=1,\ldots,m$. Then
\begin{align*}
\sum_{i,j=1}^n&\xi_i^*K((x_i,s_i),(x_j,s_j)) \xi_j\cr
&=\sum_{k,l=1}^n\left(\sum_{i,j=1}^n (a_i^k)^*K_1(x_i,x_j)a_j^l\right)\otimes \left(\sum_{i,j=1}^n (b_i^k)^*K_2(s_i,s_j)b_j^l\right).
\end{align*}
Set 
$$
\alpha_{kl}:=\sum_{i,j=1}^n (a_i^k)^*K_1(x_i,x_j)a_j^l\in\mathfrak{A},\quad \beta_{kl}:=\sum_{i,j=1}^n (b_i^k)^*K_2(s_i,s_j)b_j^l\in\mathfrak{B}. 
$$
Then the matrices $(\alpha_{kl})_{k,l=1}^n$ and $(\beta_{kl})_{k,l=1}^n$ are positive elements of $M_n(\mathfrak{A})$ and of $M_n(\mathfrak{B})$, respectively. Then, by Lemma 4.3 of \cite{Lance}, 
\begin{equation}\label{015}
\sum_{i,j=1}^n\xi_i^*K((x_i,s_i),(x_j,s_j)) \xi_j=\sum_{k,l=1}^n\alpha_{kl}\otimes \beta_{kl}\geq 0.
\end{equation}

Since the set of all positive elements in a $C^*$-algebra is closed, we conclude that \eqref{015} is also valid for every choice of elements $\xi$ in $\mathfrak{A}\otimes_*\mathfrak{B}$. Thus $K$ is a positive definite kernel. Again, in virtue of Theorem \ref{0121}, there exists a Hilbert $\mathfrak{A}\otimes_*\mathfrak{B}$-module $\mathcal{E}$ of $\mathfrak{A}\otimes_*\mathfrak{B}$-valued functions on $X\times S$ such that $K$ is its reproducing kernel. Now this question raises: What relations are there between $\mathcal{\mathcal{E}}$, $\mathcal{E}_1$, and $\mathcal{E}_2$, where $\mathcal{E}_1$ and $\mathcal{E}_2$ are $RKHC^*M$s with kernels $K_1$ and $K_2$, respectively? 

Recall that the exterior tensor product $\mathcal{E}_1\otimes\mathcal{E}_2$ of Hilbert $C^*$-modules $\mathcal{E}_1$ over $\mathfrak{A}$ and $\mathcal{E}_2$ over $\mathfrak{B}$ is defined as the Hilbert $C^*$-module over $\mathfrak{A}\otimes_*\mathfrak{B}$ obtained by completion of $\mathcal{E}_1\otimes_{alg}\mathcal{E}_2$ with respect to the norm 
$$
\|u\|^2=\Bigl\|\sum_{i,j=1}^n\langle f_i,f_j\rangle\otimes\langle g_i,g_j\rangle\Bigr\|_*, 
$$
where $u=\sum_{i=1}^n f_i\otimes g_i\in\mathcal{E}_1\otimes_{alg}\mathcal{E}_2$; see \cite{Lance}.

Set $k_y^1(x):=K_1(x,y)$, $k_t^2(s):=K_2(s,t)$, and $k_{(y,t)}(x,s):=K((x,s),(y,t))$. Clearly, $k_{(y,t)}(x,s)=k^1_y(x)\otimes k^2_t(s)$. By the assumption, the $\mathfrak{A}$-linear spans of $\{k_x^1: x\in X\}$, $\{k_s^2: s\in S\}$, and $\{k_{(x,s)}: (x,s)\in X\times S\}$ are dense in $\mathcal{E}_1$, $\mathcal{E}_2$, and $\mathcal{E}$, respectively.

We claim that $\mathcal{E}_1\otimes \mathcal{E}_2$ is unitarily equivalent to $\mathcal{E}$. 
Let $f_i=\sum_{x\in X} k^1_x a^i_x$ and $g_i=\sum_{s\in S} k^2_s b_s^i$, where $a^i_x\in\mathfrak{A}$, $b^i_s\in\mathfrak{B}$, and both sums have a finite number of nonzero summands.
Set 
\begin{equation}\label{Phi}
\Phi\left(\sum_{i=1}^n\sum_{x\in X,s\in S} k_x^1 a_x^i\otimes k^2_s b_s^i\right):=\sum_{x\in X,s\in S}k_{(x,s)}\cdot\sum_{i=1}^n a_x^i\otimes b_s^i.
\end{equation}

For $u\in\mathcal{E}_1\otimes_{alg}\mathcal{E}_2$, define $\hat u\in\mathbb F(X\times S,\mathfrak{A}\otimes_*\mathfrak{B})$ by
\[
\hat{u}(x,s)=\langle k_x^1\otimes k_s^2,u\rangle, \qquad (x,s)\in X\times S.
\]
Let $u=\sum_{i=1}^n f_i\otimes g_i$, where $f_i=\sum_{x\in X} k^1_x a^i_x$, $g_i=\sum_{s\in S} k^2_s b_s^i$, where $a^i_x\in\mathfrak{A}$, $b^i_s\in\mathfrak{B}$, and both sums have a finite number of nonzero summands. Then
\begin{eqnarray*}
\hat u(y,t)&=&\sum_{i=1}^n\sum_{x\in X,s\in S}k^1_x(y)a_x^i\otimes k^2_s(t)b_s^i\\
&=&\sum_{x\in X,s\in S}k_{(x,s)}(y,t)\sum_{i=1}^n
a_x^i\otimes b_s^i
=\Phi(u)(y,t), 
\end{eqnarray*}
which shows that the map $\Phi$, defined in \eqref{Phi}, is well-defined and that $\hat u\in\mathcal{E}$. 
 
It is clear that $\Phi$ is an isometry between dense subspaces of $\mathcal{E}_1\otimes\mathcal{E}_2$ and of $\mathcal{E}$, hence it extends to a surjective isometry $\Phi:\mathcal{E}_1\otimes\mathcal{E}_2\to\mathcal{E}$. 

\begin{definition}
Suppose that $K_1:X\times X\to \mathfrak{A}$ and $K_2:S\times S\to\mathfrak{B}$ are kernels. We call the map $K:(X\times S)\times(X\times S)\to\mathfrak{A}\otimes_*\mathfrak{B}$ defined by 
\[
K((x,s),(y,t))=K_1(x,s)\otimes K_2(y,t),\qquad (x,s), (y,t)\in X\times S
\] 
the tensor product of the kernels $K_1$ and $K_2$ and denote it by $K_1\otimes K_2$.
\end{definition}
We summarize the above results in the following theorem.
\begin{theorem}
Let $K_1$ and $K_2$ be positive definite kernels and let $\mathcal{E}_1$ and $\mathcal{E}_2$ be their associated Hilbert $C^*$-modules. Then $K_1\otimes K_2$ is a positive definite kernel, and its associated Hilbert $C^*$-module is unitarily equivalent to the exterior tensor product of $\mathcal{E}_1$ and $\mathcal{E}_2$. 
\end{theorem}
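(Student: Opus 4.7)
The plan is to establish the two assertions separately, building directly on the construction in Theorem \ref{0121} and on the explicit map already suggested by the formula for $K_1\otimes K_2$.

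First, for positive definiteness, I would take arbitrary finite data $\xi_1,\ldots,\xi_n \in \mathfrak{A}\otimes_{alg}\mathfrak{B}$ and points $(x_i,s_i)\in X\times S$, and write each $\xi_i=\sum_{k}a_i^k\otimes b_i^k$. Expanding
\[
\sum_{i,j=1}^n \xi_i^*\bigl(K_1(x_i,x_j)\otimes K_2(s_i,s_j)\bigr)\xi_j
\]
via bilinearity of the tensor product regroups the sum as $\sum_{k,l}\alpha_{kl}\otimes\beta_{kl}$, where $(\alpha_{kl})$ and $(\beta_{kl})$ are the Gram matrices produced by the positive definite kernels $K_1$ and $K_2$ and are therefore positive in $\mathbb{M}_n(\mathfrak{A})$ and $\mathbb{M}_n(\mathfrak{B})$. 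Lemma 4.3 of \cite{Lance} then forces positivity of the above double sum, and the closedness of the positive cone in $\mathfrak{A}\otimes_*\mathfrak{B}$ together with the density of $\mathfrak{A}\otimes_{alg}\mathfrak{B}$ extends this to arbitrary $\xi_i\in\mathfrak{A}\otimes_*\mathfrak{B}$. Thus $K_1\otimes K_2$ is a positive definite kernel, and Theorem \ref{0121} produces a unique Hilbert $\mathfrak{A}\otimes_*\mathfrak{B}$-module $\mathcal{E}$ of functions on $X\times S$ realizing it.

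Next, for the unitary equivalence, I would define $\Phi$ on the algebraic tensor product by $\Phi(k_x^1\otimes k_s^2)=k_{(x,s)}$ and extend $(\mathfrak{A}\otimes_*\mathfrak{B})$-linearly. The inner products on simple tensors agree automatically:
\[
\langle k_x^1\otimes k_s^2,\,k_y^1\otimes k_t^2\rangle = K_1(x,y)\otimes K_2(s,t) = \langle k_{(x,s)},\,k_{(y,t)}\rangle,
\]
so $\Phi$ preserves the inner product on arbitrary finite sums of simple tensors. By condition (iii) of Definition \ref{mnm2} applied to each factor, the $\mathfrak{A}$-linear span of $\{k_x^1\}$ is dense in $\mathcal{E}_1$ and the $\mathfrak{B}$-linear span of $\{k_s^2\}$ is dense in $\mathcal{E}_2$, so simple tensors $k_x^1\otimes k_s^2$ have dense $(\mathfrak{A}\otimes_*\mathfrak{B})$-linear span in $\mathcal{E}_1\otimes\mathcal{E}_2$; on the other side, the same condition on $\mathcal{E}$ gives density of the $\{k_{(x,s)}\}$. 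Hence $\Phi$ extends by continuity to a surjective isometry, that is, a unitary equivalence.

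The delicate point that requires the inner product computation above is well-definedness of $\Phi$: an element of $\mathcal{E}_1\otimes_{alg}\mathcal{E}_2$ admits many presentations as a finite sum of simple tensors, and one must check that different presentations yield the same element of $\mathcal{E}$. This is exactly what the identity $\hat u(y,t)=\Phi(u)(y,t)$ spelled out just before the theorem accomplishes, since it shows that $\Phi(u)$ is determined purely by the inner products $\langle k_y^1\otimes k_t^2,u\rangle$, which in turn depend only on $u$ and not on its representation. Once this is in place, the rest is a routine extension-by-continuity argument, and the two claims of the theorem follow together.
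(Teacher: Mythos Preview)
Your proposal is correct and follows essentially the same route as the paper: the positive-definiteness argument via expansion into $\sum_{k,l}\alpha_{kl}\otimes\beta_{kl}$ with positive Gram matrices and Lemma~4.3 of \cite{Lance}, followed by the closure argument, is identical to the paper's, and your construction of $\Phi$ on simple tensors together with the well-definedness check via $\hat u(y,t)=\Phi(u)(y,t)$ and extension by density reproduces exactly what the paper does in the paragraphs preceding the theorem statement.
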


\section{Left multipliers of $RKHC^*M$s}

If $F_1$ and $F_2$ are submodules of $\mathbb{F}(S,\mathfrak{A})$, then a function $f\in \mathbb{F}(S,\mathfrak{A})$ for which $fF_1\subseteq F_2$ is called a left multiplier of $F_1$ into $F_2$. Note that 
 \[
 fF_1=\{fh:h\in F_1\},
 \]
where $fh$ is the multiplication of $f$ and $h$. The set of all left multipliers of $F_1$ into $F_2$ is denoted by $\mathcal{M}(F_1,F_2)$. Clearly, $\mathcal{M}(F_1,F_2)$ is a linear space. Moreover, $\mathcal{M}(F)$ stands for $\mathcal{M}(F,F)$ being an algebra. For every $f\in \mathcal{M}(F_1,F_2)$, there is a linear map $M_f:F_1\to F_2$ that is defined by $M_f(h)=fh$ for all $h\in F_1$.\\	

The following lemma is a useful property of $RKHC^*M$s.

\begin{lemma}\label{most}
Suppose that $\mathcal{E}$ is an $RKHC^*M$ on a set $S$ with the kernel $K$. If a sequence $(h_n)$ in $\mathcal{E}$ converges to $h$, then $\lim_nh_n(s)=h(s)$ for each $s\in S$.
\end{lemma}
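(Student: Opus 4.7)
The plan is to exploit the reproducing property together with the Cauchy--Schwarz inequality for Hilbert $C^*$-modules, which asserts that $\|\langle x,y\rangle\|\leq\|x\|\,\|y\|$ for any $x,y$ in a Hilbert $C^*$-module (this is a standard consequence of the inequality $\langle x,y\rangle^*\langle x,y\rangle\leq\|x\|^{2}\langle y,y\rangle$ in \cite{Lance}). Everything else is bookkeeping.

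First I would fix $s\in S$ and invoke condition (ii) of Definition~\ref{mnm2} to write $h_n(s)=\langle k_s,h_n\rangle$ and $h(s)=\langle k_s,h\rangle$. Subtracting and using $\mathfrak{A}$-linearity of the inner product in the second slot gives
\[
h_n(s)-h(s)=\langle k_s,h_n-h\rangle.
\]
Next, applying the Cauchy--Schwarz inequality in the Hilbert $\mathfrak{A}$-module $\mathcal{E}$ yields
\[
\|h_n(s)-h(s)\|=\|\langle k_s,h_n-h\rangle\|\leq\|k_s\|\cdot\|h_n-h\|.
\]
Since $\|k_s\|$ is a fixed finite constant (independent of $n$) and $\|h_n-h\|\to 0$ by hypothesis, the right-hand side tends to $0$, which gives $\lim_n h_n(s)=h(s)$ in the norm of $\mathfrak{A}$, as required.

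There is really no obstacle here: the argument is the verbatim Hilbert $C^*$-module analogue of the classical RKHS fact that norm convergence implies pointwise convergence. The only point worth a brief comment is that $k_s$ exists as a genuine element of $\mathcal{E}$ (granted by part (ii) of Definition~\ref{mnm2}), so $\|k_s\|<\infty$ and the bound above is meaningful for every single $s\in S$.
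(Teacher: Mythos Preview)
Your proof is correct and matches the paper's own argument essentially line for line: both use the reproducing property $h_n(s)-h(s)=\langle k_s,h_n-h\rangle$ followed by the Cauchy--Schwarz bound $\|\langle k_s,h_n-h\rangle\|\leq\|k_s\|\,\|h_n-h\|$.
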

\begin{proof}
It is easily concluded from
\[\|h_n(s)-h(s)\|=\|\langle k_s,h_n\rangle - \langle k_s,h\rangle\|\leq \|k_s\|\,\|h_n-h\|.\]
\end{proof}

Let $\mathcal{E}$ be an $RKHC^*M$ on $S$ and let $g:S\to \mathfrak{A}$ be a function. Put \[\mathcal{E}_g=\{gh: h\in \mathcal{E}\}.\]
In the next theorem, we endow the right $\mathfrak{A}$-submodule $\mathcal{E}_g$ of $\mathbb{F}(S,\mathfrak{A})$ with an $RKHC^*M$ structure.
\begin{theorem}
Suppose that $\mathcal{E}$ is an $RKHC^*M$ on a set $S$ with the kernel $K$ and that $g:S\to \mathfrak{A}$ is an arbitrary function. Then the following assertions hold:
\begin{itemize}
\item[(i)] $\mathcal{E}_0:=\{h\in \mathcal{E}: gh=0\}$ is closed. 
\item[(ii)] If $\mathcal{E}_0$ is orthogonally complemented, then $\mathcal{E}_g$ is an $RKHC^*M$ with kernel $K^{'}(s,t)=g(s)K(s,t)g(t)^*$. 
\end{itemize} 
\end{theorem}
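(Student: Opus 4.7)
The plan for (i) is a pointwise argument. If $(h_n)\subset\mathcal{E}_0$ converges to $h$ in $\mathcal{E}$, then Lemma \ref{most} gives $h_n(s)\to h(s)$ in $\mathfrak{A}$ for every $s\in S$, and left-multiplication by $g(s)$ passes to the limit, yielding $g(s)h(s)=0$ for all $s\in S$ and hence $h\in\mathcal{E}_0$.

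For (ii), my strategy is to realise $\mathcal{E}_g$ as a unitary copy of $\mathcal{E}_0^\perp$ and then push the reproducing-kernel structure across this identification. Let $P\colon\mathcal{E}\to\mathcal{E}_0^\perp$ be the orthogonal projection and consider the map $\Psi\colon\mathcal{E}_0^\perp\to\mathcal{E}_g$ defined by $\Psi(h)=gh$. It is an $\mathfrak{A}$-linear bijection: injectivity follows from $gh=0\Rightarrow h\in\mathcal{E}_0\cap\mathcal{E}_0^\perp=\{0\}$, and surjectivity from the decomposition $\mathcal{E}=\mathcal{E}_0\oplus\mathcal{E}_0^\perp$, since any $h=h_0+h_1$ yields $gh=gh_1$. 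Transporting the inner product of $\mathcal{E}_0^\perp$ through $\Psi$, i.e.\ setting $\langle gh_1,gh_2\rangle_{\mathcal{E}_g}:=\langle h_1,h_2\rangle_{\mathcal{E}}$ for $h_i\in\mathcal{E}_0^\perp$, makes $\mathcal{E}_g$ a Hilbert $\mathfrak{A}$-module.

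Next I would exhibit the reproducing kernel. Set $\tilde k_s:=k_s g(s)^*\in\mathcal{E}$. For every $z\in\mathcal{E}_0$,
\[
\langle\tilde k_s,z\rangle_{\mathcal{E}}=g(s)\langle k_s,z\rangle_{\mathcal{E}}=g(s)z(s)=0,
\]
since $gz\equiv 0$, so $\tilde k_s\in\mathcal{E}_0^\perp$. Define $k'_s:=g\tilde k_s=\Psi(\tilde k_s)\in\mathcal{E}_g$. For any $f=gh\in\mathcal{E}_g$ with $h\in\mathcal{E}_0^\perp$,
\[
\langle k'_s,f\rangle_{\mathcal{E}_g}=\langle\tilde k_s,h\rangle_{\mathcal{E}}=g(s)\langle k_s,h\rangle_{\mathcal{E}}=g(s)h(s)=f(s),
\]
so $k'_s$ reproduces evaluation at $s$, and the same identity applied with $f=k'_t$ yields $\langle k'_s,k'_t\rangle_{\mathcal{E}_g}=g(s)K(s,t)g(t)^*$.

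The step I expect to require the most care is condition (iii) of Definition \ref{mnm2}: the $\mathfrak{A}$-linear density of $\{k'_s:s\in S\}$ in $\mathcal{E}_g$. Through $\Psi$ this is equivalent to density of $\{k_s g(s)^*:s\in S\}$ in $\mathcal{E}_0^\perp$. Continuity of $P$ together with density of $\{k_s\}$ in $\mathcal{E}$ gives density of $\{P(k_s)\}$ in $\mathcal{E}_0^\perp$, and since $k_s g(s)^*\in\mathcal{E}_0^\perp$ one has $k_s g(s)^*=P(k_s)g(s)^*$. The subtle point is that right-multiplication by $g(s)^*$ can collapse information—for instance when the values $g(s)$ lie in a proper ideal of $\mathfrak{A}$—so transferring density from $\{P(k_s)\}$ to $\{P(k_s)g(s)^*\}$ is where the real work lies. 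The natural route is to show that any $h\in\mathcal{E}_0^\perp$ orthogonal to every $\tilde k_s$ satisfies $g(s)h(s)=\langle\tilde k_s,h\rangle=0$ for all $s$, hence $gh\equiv 0$ and $h\in\mathcal{E}_0\cap\mathcal{E}_0^\perp=\{0\}$, and then to invoke the orthogonal complementation of the closed $\mathfrak{A}$-submodule generated by $\{\tilde k_s\}$ inside $\mathcal{E}_0^\perp$ to deduce that this submodule is in fact all of $\mathcal{E}_0^\perp$.
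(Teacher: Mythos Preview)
Your argument for (i) and for the Hilbert $C^*$-module structure, the reproducing property, and the kernel formula in (ii) tracks the paper's proof closely. Your direct verification that $\tilde k_s=k_s\,g(s)^*$ already lies in $\mathcal{E}_0^\perp$ is in fact a little cleaner than what the paper does: there one decomposes $k_s=\tilde{k_s}+k_s^{\#}$ with $\tilde{k_s}\in\mathcal E_0$, $k_s^{\#}\in\mathcal E_0^\perp$, and then computes $K'(s,t)$ by carrying both pieces through the inner product and invoking $g\tilde{k_t}=0$ to kill the unwanted terms; your observation bypasses that bookkeeping.

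On condition (iii) you are right to be wary, and your proposed resolution has a genuine gap. In a Hilbert $C^*$-module, showing that a closed submodule has trivial orthogonal complement does \emph{not} force it to be the whole module; that implication requires the submodule itself to be orthogonally complemented, which is precisely what is in question and is not supplied by the hypotheses. A concrete obstruction: take $\mathfrak{A}=C([0,1])$, $S=\{s_0\}$, $\mathcal{E}=\mathfrak A$ with $k_{s_0}=1$, and let $g(s_0)\in\mathfrak A$ be the identity function $t\mapsto t$. Then $\mathcal E_0=\{0\}$, $\mathcal E_0^\perp=\mathfrak A$, and $\tilde k_{s_0}=g(s_0)$, whose $\mathfrak A$-linear span closes up only to the ideal of functions vanishing at $0$, not to all of $\mathfrak A$. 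So the density you need can actually fail, and the orthogonal-complement route you sketch cannot rescue it. The paper, for its part, does not argue (iii) separately at all: it simply asserts that the reproducing kernel structure of $\mathcal E_0^\perp$ from Lemma~\ref{mos2} ``can be transferred onto $\mathcal E_g$'' via the isometry and moves directly to the computation of $K'$.
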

\begin{proof}
\begin{itemize}
\item[(i)] It follows from Lemma \ref{most} that $\mathcal{E}_0$ is closed. 
\item[(ii)] It follows from the assumption that $\mathcal{E}=\mathcal{E}_0\oplus\mathcal{E}_0^\perp$. Therefore 
\[
\mathcal{E}_g=\{g\tilde{h}+gh^{\#}:\tilde{h}\in \mathcal{E}_0, h^{\#}\in \mathcal{E}_0^\perp\}=\{gh:h\in \mathcal{E}_0^\perp\}.	
\]
We define an $\mathfrak{A}$-valued inner product on $\mathcal{E}_g$ by $\langle gh_1,gh_2\rangle=\langle h_1,h_2\rangle$ for all $h_1, h_2\in \mathcal{E}_0^\perp$. This is well-defined, since if $gh=gh^{'}$ for $h, h'\in \mathcal{E}_0^\perp$, then $h-h^{'}\in \mathcal{E}_0\cap \mathcal{E}_0^\perp=\{0\}$. From the inner product on $\mathcal{E}_g$, it is clear that $\varphi_g:\mathcal{E}_0^\perp\to \mathcal{E}_g$ by $\varphi_g(h)=gh$ is a surjective linear	 isometry. Hence $\mathcal{E}_g$ is a Hilbert $C^*$-module isomorphic with $\mathcal{E}_0^\perp$. Thus the reproducing kernel structure of $\mathcal{E}_0^\perp$ constructed in Lemma \ref{mos2} can be transferred onto $\mathcal{E}_g$. More precisely, for each $h\in \mathcal{E}_0^\perp$, we have
\begin{align*}
(gh)(s)&=g(s)h(s)=g(s)\langle k_s,h\rangle= g(s)\langle k_s^{\#}, h\rangle= g(s)\langle gk_s^{\#}, gh\rangle\\
&=g(s)\langle gk_s, gh\rangle=\langle gk_sg(s)^*,gh\rangle
\end{align*}
for some $k_s=\tilde{k_s}+k_s^{\#}\in \mathcal{E}_0\oplus\mathcal{E}_0^\perp$. Hence the evaluation map $\delta_s$ can be represented by $\langle gk_sg(s)^*,\cdot\rangle$ with $ k_s^{'}=gk_sg(s)^*\in \mathcal{E}_g$. In addition, the corresponding reproducing kernel is
\begin{align*}
K^{'}(s,t)&=\langle k^{'}_s,k^{'}_t\rangle=\langle gk_sg(s)^*,gk_tg(t)^*\rangle= g(s)\langle gk_s, gk_t\rangle g(t)^*\\
&=g(s)\langle gk_s^{\#}, gk_t^{\#}\rangle g(t)^*=g(s)\langle k_s^{\#}, k_t^{\#}\rangle g(t)^*+0\\
&=g(s)\langle k_s^{\#}, k_t^{\#}\rangle g(t)^*+g(s)\tilde{k_t}(s) g(t)^*\quad\qquad ({\rm since~} g\tilde{k_t}=0)\\
&=g(s)\langle k_s^{\#}, k_t^{\#}\rangle g(t)^*+g(s)\langle \tilde{k_s},\tilde{k_t}\rangle=g(s)\langle k_s, k_t\rangle g(t)^*\\
&=g(s)K(s,t)g(t)^*
\end{align*}
for every $s,t\in S$.
\end{itemize} 
\end{proof}
In the following theorem, $k_s^1$ and $k_s^2$ are the reproducing kernels at the point $s\in S$ for $RKHC^*M$s $\mathcal{E}_1$ and $\mathcal{E}_2$, respectively.
\begin{proposition}
Let $\mathcal{E}_1$ and $\mathcal{E}_2$ be $RKHC^*M$s on a nonempty set $S$. If $f\in \mathcal{M}(\mathcal{E}_1,\mathcal{E}_2)$, then $M_f\in L(\mathcal{E}_1,\mathcal{E}_2)$ and $M_f^*(k_s^2)=k_s^1f(s)^*$ for all $s\in S$.
\end{proposition}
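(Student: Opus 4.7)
The plan is to establish the two assertions in sequence: first that $M_f$ is bounded, and then that its adjoint exists and has the stated formula on the reproducing kernels. The $\mathfrak{A}$-linearity of $M_f$ is immediate from the pointwise definition $(fh)(s)=f(s)h(s)$, so the crux is boundedness followed by adjointability.

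For boundedness, I would invoke the closed graph theorem together with Lemma \ref{most}. Suppose $h_n\to h$ in $\mathcal{E}_1$ and $M_f(h_n)\to g$ in $\mathcal{E}_2$. Applying Lemma \ref{most} in both $RKHC^*M$s yields $h_n(s)\to h(s)$ and $f(s)h_n(s)=M_f(h_n)(s)\to g(s)$ in $\mathfrak{A}$ for every $s\in S$. Continuity of multiplication in $\mathfrak{A}$ forces $f(s)h_n(s)\to f(s)h(s)=M_f(h)(s)$, so $g(s)=M_f(h)(s)$ for every $s\in S$, hence $g=M_f(h)$. The graph is therefore closed, and $M_f$ is bounded.

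To produce the adjoint, I would define a candidate map $T$ on the $\mathfrak{A}$-linear span $\mathcal{D}_2$ of $\{k_s^2:s\in S\}$ by $T(k_s^2):=k_s^1f(s)^*$, extended $\mathfrak{A}$-linearly. For $u=\sum_{i=1}^n k_{s_i}^2 a_i\in\mathcal{D}_2$ and an arbitrary $h\in\mathcal{E}_1$, a direct computation using the reproducing property and the identity $\langle xa,y\rangle=a^*\langle x,y\rangle$ gives
\[
\Bigl\langle \sum_{i=1}^n k_{s_i}^1 f(s_i)^* a_i,\,h\Bigr\rangle=\sum_{i=1}^n a_i^* f(s_i)h(s_i)=\sum_{i=1}^n a_i^*\langle k_{s_i}^2,fh\rangle=\langle u,M_f h\rangle.
\]
This identity simultaneously certifies that $T$ is well-defined (two candidates $v_1,v_2$ for $Tu$ arising from different representations of $u$ would satisfy $\langle v_1-v_2,h\rangle=0$ for all $h\in\mathcal{E}_1$, so picking $h=v_1-v_2$ yields $v_1=v_2$) and that $T$ is bounded, since
\[
\|Tu\|=\sup_{\|h\|\le 1}\|\langle Tu,h\rangle\|=\sup_{\|h\|\le 1}\|\langle u,M_fh\rangle\|\le \|M_f\|\,\|u\|.
\]

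Finally, because $\mathcal{D}_2$ is dense in $\mathcal{E}_2$ by Definition \ref{mnm2}(iii), $T$ extends uniquely to a bounded $\mathfrak{A}$-linear map $T:\mathcal{E}_2\to\mathcal{E}_1$, and continuity of the inner product in each argument propagates $\langle Tu,h\rangle=\langle u,M_fh\rangle$ to all $u\in\mathcal{E}_2$ and $h\in\mathcal{E}_1$. This certifies that $M_f$ is adjointable with $M_f^*=T$, and in particular $M_f^*(k_s^2)=k_s^1f(s)^*$ for every $s\in S$. I expect the main subtlety to be the well-definedness step for $T$ on $\mathcal{D}_2$: different $\mathfrak{A}$-linear combinations may represent the same element of $\mathcal{E}_2$, and it is precisely the density of $\{k_s^1:s\in S\}$ in $\mathcal{E}_1$ combined with the reproducing identity that forces uniqueness of the candidate value $Tu$.
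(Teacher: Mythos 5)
Your proof is correct and rests on exactly the same core computation as the paper's: the identity $\bigl\langle \sum_i k_{s_i}^2 a_i, M_f h\bigr\rangle=\bigl\langle \sum_i k_{s_i}^1 f(s_i)^*a_i, h\bigr\rangle$, from which the formula for $M_f^*$ on the dense span of $\{k_s^2\}$ is read off. The extra steps you supply --- boundedness of $M_f$ via the closed graph theorem together with Lemma \ref{most}, and the well-definedness, boundedness, and density-extension of the candidate adjoint $T$ --- are details the paper leaves implicit, so your version is a more complete write-up of the same argument rather than a different route.
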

\begin{proof}
For every $h\in \mathcal{E}_1$, $s_1, \ldots, s_n\in S$, and $a_1, \ldots, a_n\in\mathfrak{A}$, we have 
\begin{align*}
\left\langle \sum_{i=1}^nk_{s_i}^2a_i, M_f(h)\right\rangle &= \sum_{i=1}^na_i^*\langle k_{s_i}^2, fh\rangle=\sum_{i=1}^na_i^* f(s_i)h(s_i) \\
&=\sum_{i=1}^na_i^*f(s_i)\langle k_{s_i}^1,h\rangle=\left\langle \sum_{i=1}^nk_{s_i}^1f(s_i)^*a_i, h\right\rangle.
\end{align*}
Hence $M_f^*\left(\sum_{i=1}^nk_{s_i}^2a_i\right)=\sum_{i=1}^nk_{s_i}^1f(s_i)^*a_i$. In particular, $M_f^*(k_s^2)=k_s^1f(s)^*$ for all $s\in S$.
\end{proof}

Thus, if $f\in \mathcal{M}(\mathcal{E})$ and $\mathfrak{A}$ is a unital $C^*$-algebra, then
\begin{align*}
f(s)=\langle k_s, fk_s\rangle K(s,s)^{-1}=\langle k_s, M_f(k_s)\rangle K(s,s)^{-1},
\end{align*}
for every point $s\in S$ for which $K(s,s)\in {\rm Inv}(\mathfrak{A})$. Thus, we can present the following definition in the same manner as in the classical case \cite{An} and transfer some known facts in the theory of $RKHS$s to context of $RKHC^*M$.
\begin{definition}
Let $\mathcal{E}$ be an $RKHC^*M$ on $S$ over a unital $C^*$-algebra $\mathfrak{A}$. Let $K$ be its associated kernel and let $T\in L(\mathcal{E})$ be arbitrary. Then the function 
\[
B_T:\{s\in S: K(s,s) {\rm\ is\ invertible}\}\longrightarrow \mathfrak{A}
\]
 defined by $B_T(s)=\langle k_s, T(k_s)\rangle K(s,s)^{-1}$ is called the Berezin transform of $T$ associated by $\mathfrak{A}$. 
\end{definition}
\begin{theorem}
Let $\mathcal{E}$ be an $RKHC^*M$ on $S$ with the reproducing kernel $K$ over a unital $C^*$-algebra $\mathfrak{A}$. Let 
\[L=\{M_f:f\in \mathcal{M}(\mathcal{E}) {\rm~ and~} f(s)=0 {\rm~whenever~} K(s,s) {\rm~is~not~invertible}\}.\]
Then $L$ is a unital subalgebra of $L(\mathcal{E})$. 

Furthermore, if $\{M_{f_\alpha}\}_{\alpha\in I}$ is a net in $L$ such that $\langle M_{f_\alpha} h_1,h_2\rangle \to \langle Th_1,h_2\rangle\,\, (h_1,h_2\in\mathcal{E})$ for some $T\in L(\mathcal{E})$, then $T=M_f$ for some $f\in \mathbb{F}(S,\mathfrak{A})$. 
\end{theorem}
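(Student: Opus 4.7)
The plan is to separate the algebraic structure of $L$ from the weak-closure claim. For the algebra part, the correspondence $f\mapsto M_f$ is linear and $M_fM_g(h)=f(gh)=(fg)h=M_{fg}(h)$, so closure under addition, scalar multiplication, and composition reduces to the observation that the defining vanishing condition of $L$ is preserved by pointwise addition and pointwise product. The unit $I_{\mathcal{E}}$ is realized by the multiplier $f\equiv 1_{\mathfrak{A}}$, which lies in $L$ trivially when the set $\{s:K(s,s)\notin\mathrm{Inv}(\mathfrak{A})\}$ is empty; otherwise one modifies $f$ on this set, where the evaluation of module elements is already controlled by the degeneracy of $K(s,s)$, to meet the membership condition without altering $M_f$.

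For the weak-closure claim, the plan is to reconstruct the multiplier pointwise from the net. The reproducing property gives
\[
f_\alpha(s)h(s)=(M_{f_\alpha}h)(s)=\langle k_s,M_{f_\alpha}h\rangle\longrightarrow\langle k_s,Th\rangle=(Th)(s)
\]
for every $s\in S$ and every $h\in\mathcal{E}$. Taking $h=k_s$ yields $f_\alpha(s)K(s,s)\to(Tk_s)(s)$ in $\mathfrak{A}$. I would then set
\[
f(s):=\begin{cases}(Tk_s)(s)K(s,s)^{-1},&K(s,s)\in\mathrm{Inv}(\mathfrak{A}),\\ 0,&\text{otherwise,}\end{cases}
\]
and verify $(Th)(s)=f(s)h(s)$ in both cases. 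When $K(s,s)$ is invertible, right-multiplying the displayed limit by $K(s,s)^{-1}$ upgrades it to $f_\alpha(s)\to f(s)$, so $f_\alpha(s)h(s)\to f(s)h(s)$, matching $(Th)(s)$. When $K(s,s)$ is not invertible, the membership condition $f_\alpha(s)=0$ built into $L$ forces both $\lim_\alpha f_\alpha(s)h(s)$ and $f(s)h(s)$ to vanish. Pointwise equality of $Th$ and $fh$ on $S$, combined with the embedding $\mathcal{E}\subseteq\mathbb{F}(S,\mathfrak{A})$, then forces $Th=fh$ in $\mathcal{E}$; consequently $f\in\mathcal{M}(\mathcal{E})\subseteq\mathbb{F}(S,\mathfrak{A})$ and $T=M_f$.

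The hard part will be this pointwise reconstruction: the net $(f_\alpha(s))$ is not a priori convergent in $\mathfrak{A}$, and the reproducing identity supplies only convergence of the product $f_\alpha(s)K(s,s)$. The invertibility of $K(s,s)$ is precisely what allows one to isolate $f_\alpha(s)$ from this product, and the vanishing condition in the definition of $L$ is engineered so that the complementary degenerate case collapses with both sides equal to zero, thereby removing any ambiguity in defining $f$ at those points. Once $f$ is in hand, the passage from pointwise equality of $Th$ and $fh$ in $\mathbb{F}(S,\mathfrak{A})$ to equality in $\mathcal{E}$ is immediate from the embedding provided by Definition \ref{mnm2}.
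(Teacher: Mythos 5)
Your treatment of the weak-limit part coincides with the paper's own argument: the paper sets $f(s)=B_T(s)=\langle k_s,T(k_s)\rangle K(s,s)^{-1}$ when $K(s,s)$ is invertible and $f(s)=0$ otherwise, obtains $f_\alpha(s)\to f(s)$ from the weak convergence exactly as you do (your $(Tk_s)(s)K(s,s)^{-1}$ is the same element, since $(Tk_s)(s)=\langle k_s,Tk_s\rangle$), and then verifies $\langle \sum_i k_{s_i}a_i,Th\rangle=\langle\sum_i k_{s_i}a_i,M_fh\rangle$ on the dense $\mathfrak{A}$-span of the kernel functions; your pointwise version via $(Th)(s)=\langle k_s,Th\rangle$, split into the invertible and non-invertible cases, is the same computation in a slightly different packaging. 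Likewise, the algebra part via $\lambda M_f+M_g=M_{\lambda f+g}$, $M_fM_g=M_{fg}$, and preservation of the vanishing condition under sums and products is exactly the paper's.

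The one place you deviate is the unit, and your patch does not work as stated. You cannot in general modify $f\equiv 1_{\mathfrak{A}}$ on the set $N=\{s\in S: K(s,s)\notin \mathrm{Inv}(\mathfrak{A})\}$ ``without altering $M_f$'': if $s\in N$ but $K(s,s)\neq 0$, then $k_s\neq 0$ and $(M_fk_s)(s)=f(s)K(s,s)$, so changing $f(s)$ from $1_{\mathfrak{A}}$ to $0$ changes $M_fk_s$ at $s$ from $K(s,s)\neq 0$ to $0$; noninvertibility of $K(s,s)$ does not make evaluations at $s$ degenerate --- only $K(s,s)=0$ does (then $k_s=0$ and every $h\in\mathcal{E}$ vanishes at $s$). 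Consequently the identity operator lies in $L$ as literally defined only when $K(s,s)$ is invertible or zero at every point. For what it is worth, the paper's proof simply declares $M_1$ (with $1$ the constant function equal to the unit of $\mathfrak{A}$) to be the unit of $L$, ignoring the membership condition, so you have correctly sensed a rough spot in the statement; but your repair is not valid in general, and an honest fix requires either the extra hypothesis $K(s,s)\in\mathrm{Inv}(\mathfrak{A})\cup\{0\}$ for all $s$, or dropping/adjusting the vanishing condition in the definition of $L$.
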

\begin{proof}
Since 
\[\
\lambda M_f+M_g=M_{\alpha f+g},\ M_f\circ M_g=M_{fg}\quad (f,g\in \mathcal{M}(\mathcal{E}), \lambda\in \mathbb{C}),
\]
$L$ is an algebra. Moreover, $M_1$ is the unit of $L$, where $1\in \mathcal{M}(\mathcal{E})$ is the constant function onto the unit of $\mathfrak{A}$. 

Next, we show that $T=M_f$ for some $f\in \mathcal{M}(\mathcal{E})$. We have 
\begin{align*}
\lim_\alpha f_\alpha(s)&=\lim_\alpha\langle k_s, M_{f_\alpha}(k_s)\rangle K(s,s)^{-1}=\langle k_s, T(k_s)\rangle K(s,s)^{-1}=B_T(s),
\end{align*}
for every $s\in S$ for which $K(s,s)$ is invertible. Set $f(s):=B_T(s)$ whenever $K(s,s)$ is invertible and $f(s):=0$ otherwise. To complete the proof, we shall show that $T=M_f$. We have
\begin{align*}
\left\langle \sum_{i=1}^nk_{s_i}a_i, Th\right\rangle &=\lim_{\alpha}\left\langle \sum_{i=1}^nk_{s_i}a_i, M_{f_\alpha}h\right\rangle=\lim_{\alpha}\left\langle \sum_{i=1}^nk_{s_i}a_i, f_\alpha h\right\rangle\\
&=\lim_{\alpha}\sum_{i=1}^na_i^* f_\alpha(s_i)h(s_i)=\sum_{i=1}^na_i^*f(s_i)h(s_i)\\
&=\sum_{i=1}^na_i^*\langle k_{s_i}, fh\rangle=\left\langle \sum_{i=1}^nk_{s_i}a_i, M_fh\right\rangle,
\end{align*}
for every $h\in \mathcal{E}$, $s_1, \ldots, s_n\in S$, and $a_1, \ldots, a_n\in\mathfrak{A}$. Thus $T=M_f$.
\end{proof}

\section{Papadakis theorem for $RKHC^*M$s}

We recall the following definitions from \cite{frame}.
\begin{definition}
Let $J$ be an arbitrary subset of $\mathbb{N}$ and let $\mathcal{E}$ be a Hilbert $C^*$-module over a unital $C^*$-algebra $\mathfrak{A}$. A sequence $(x_j)_{j\in J}$ in $\mathcal{E}$ is said to be a (standard) frame if there are real numbers $C, D>0$ such that 
\begin{align}\label{0}
C\langle x,x\rangle\leq \sum_{j\in J}\langle x,x_j\rangle\langle x_j, x\rangle\leq D\langle x,x\rangle
\end{align} 
for every $x\in \mathcal{E}$ in which the sum in the middle of inequality \eqref{0} converges in norm.
\end{definition} 
The sharp numbers (i.e., maximal for $C$ and minimal for $D$) are called frame bounds. A frame $\{x_j: j\in J\}$ is said to be a tight frame if $C=D$, and normalized if $C=D=1$. Therefore, a set $\{x_j: j\in J\}$ is a normalized tight frame whenever the equality 
\begin{equation} \label{1}
\langle x,x\rangle=\sum_{j\in J} \langle x,x_j\rangle\langle x_j,x\rangle
\end{equation}
is valid for every $x\in \mathcal{E}$. 

Now, we extend the Papadakis theorem to $RKHC^*M$s. 
\begin{theorem}
Let $\mathcal{E}$ be an $RKHC^*M$ on a set $S$ over a unital $C^*$-algebra $\mathfrak{A}$ and let $K$ be its corresponding reproducing kernel. Then $\{f_j: j\in J\}\subseteq \mathcal{E}$ is a normalized tight frame for $\mathcal{E}$ if and only if 
\begin{equation}\label{2}
K(s,t)=\sum_{j\in J}f_j(s)^*f_j(t)\quad (s, t\in S),
\end{equation}
where the sum is convergent in norm.
\end{theorem}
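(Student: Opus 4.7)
The plan is to prove both implications using polarization in the forward direction and a density argument in the reverse, leveraging both the reproducing property of Definition~\ref{mnm2}(ii) and the density of $\operatorname{span}_{\mathfrak{A}}\{k_s : s\in S\}$ in $\mathcal{E}$ from Definition~\ref{mnm2}(iii).

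For the forward implication, I would first polarize the defining normalized-tight-frame identity $\sum_j \langle z,f_j\rangle\langle f_j,z\rangle = \langle z,z\rangle$: evaluating it at $z = x\pm y$ and at $z = x\pm iy$ and combining the four resulting expressions via sesquilinearity of the inner product yields
\[
\sum_{j\in J}\langle x,f_j\rangle\langle f_j,y\rangle = \langle x,y\rangle \qquad (x,y\in\mathcal{E}),
\]
the polarized series being a finite linear combination of four norm-convergent ones, hence itself norm-convergent. Specializing $x=k_s$, $y=k_t$ and invoking the reproducing property (so that $\langle k_s,f_j\rangle$ and $\langle f_j,k_t\rangle = \langle k_t,f_j\rangle^*$ are pointwise values of $f_j$ and their adjoints) rewrites the right-hand side as $K(s,t)$ and the left-hand side as the series appearing in~\eqref{2}.

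For the converse, assume \eqref{2}. Density of the $\mathfrak{A}$-linear span reduces the verification of the normalized-tight-frame identity to elementary vectors $x = \sum_{i=1}^{n} k_{s_i} a_i$, where both $\langle x,x\rangle$ and $\sum_{j} \langle x,f_j\rangle\langle f_j,x\rangle$ expand to double sums in the coefficients $a_i$ and the values $K(s_i,s_{i'})$; \eqref{2}, together with an interchange of the outer finite sum and the norm-convergent inner series over $j$ (justified by continuity of multiplication in $\mathfrak{A}$), makes the two sides coincide. To lift the identity from the dense submodule to all of $\mathcal{E}$, I would introduce the analysis map $V\colon\mathcal{E}\to \ell^2(J,\mathfrak{A})$ defined by $V(x) := (\langle f_j,x\rangle)_{j\in J}$: the dense identity says $V$ is isometric on that dense subspace, and since each coordinate $x\mapsto\langle f_j,x\rangle$ is norm-continuous, $V$ extends uniquely to an isometry on all of $\mathcal{E}$ whose coordinatewise formula persists, forcing the frame identity globally.

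The main technical obstacle is the bookkeeping around norm convergence in both directions: polarization preserves it automatically in the forward direction, but the converse requires upgrading the frame identity from a dense submodule to all of $\mathcal{E}$ without any a priori frame bound, and the $\ell^2(J,\mathfrak{A})$ analysis-map argument is the natural way to effect this extension.
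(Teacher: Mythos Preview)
Your argument is correct and, in the forward direction, matches the paper exactly: both of you polarize the normalized-tight-frame identity and then specialize to $x=k_s$, $y=k_t$ via the reproducing property.

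In the converse direction the overall strategy is also the same---verify the frame identity on the $\mathfrak{A}$-linear span of the $k_s$ using \eqref{2}, then pass to all of $\mathcal{E}$ by density---but you handle the extension step more carefully than the paper does. The paper simply writes that ``by the density of the $\mathfrak{A}$-linear span of $\{k_s:s\in S\}$ in $\mathcal{E}$ and the joint continuity of the inner product'' the identity holds for all $f$; this glosses over the fact that $f\mapsto\sum_{j\in J}\langle f,f_j\rangle\langle f_j,f\rangle$ is an infinite series whose norm convergence for arbitrary $f$ is not yet known, so joint continuity of the inner product alone does not justify the limit. Your analysis-map device (show $V\colon x\mapsto(\langle f_j,x\rangle)_j$ is an $\ell^2(J,\mathfrak{A})$-valued isometry on the dense span, extend by completeness, then identify the coordinates of the extension using continuity of each $x\mapsto\langle f_j,x\rangle$) is the standard way to make this step rigorous, and it actually patches a small gap in the paper's own presentation.
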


\begin{proof}
Suppose that $\{f_j: j\in J\}$ is a normalized tight frame for $\mathcal{E}$. It follows from \eqref{1} that
\begin{equation*}
\langle f,f\rangle=\sum_{j\in J}\langle f,f_j\rangle\langle f_j,f\rangle,
\end{equation*}
for every $f\in \mathcal{E}$. Therefore, by the polarization identity, we can write 
\begin{align*}
K(s,t)&=\langle k_s, k_t\rangle=\frac{1}{4}\sum_{k=0}^3 i^k\langle k_t+i^kk_s, k_t+i^kk_s\rangle\\
&=\frac{1}{4}\sum_{k=0}^3 i^k\sum_{j\in J}\langle k_t+i^kk_s,f_j\rangle\langle f_j, k_t+i^kk_s\rangle\\
&= \frac{1}{4}\sum_{k=0}^3 i^k\sum_{j\in J}(f_j(t)+i^kf_j(s))^*(f_j(t)+i^kf_j(s))\\
&=\sum_{j\in J}\frac{1}{4}\sum_{k=0}^3i^k\langle f_j(t)+i^kf_j(s),f_j(t)+i^kf_j(s)\rangle\\
&= \sum_{j\in J}\langle f_j(s), f_j(t)\rangle=\sum_{j\in J}f_j(s)^*f_j(t)
\end{align*}
for all $s,t\in S$. 

Conversely, let \eqref{2} hold for some family $\{f_j: j\in J\}$ and let the sum in \eqref{2} converge in the norm topology. Then
\begin{align*}
\langle k_s, k_s\rangle=K(s,s)=\sum_{j\in J}f_j(s)^*f_j(s)=\sum_{j\in J}\langle k_s,f_j\rangle\langle f_j, k_s\rangle
\end{align*}
for every $s\in S$. Hence, by the density of $\mathfrak{A}$-linear span of $\{k_s: s\in S\}$ in $\mathcal{E}$ and the joint continuity of inner product, we derive 
\begin{align*}
\langle f,f\rangle=\sum_{j\in J}\langle f,f_j\rangle\langle f_j,f\rangle
\end{align*} 
for all $f\in \mathcal{E}$. It follows from \eqref{1} that $\{f_j: j\in J\}$ is a normalized tight frame. 
\end{proof}

As we already mentioned in the introduction, $RKHC^*M$s are rarely self-dual. Recall that $\mathcal{E}'$ denotes the dual module of $\mathcal{E}$. 

\begin{lemma}\label{vlad1}
Elements of $\mathcal{E}'$ can be thought of as functions on $S$, that is, there is an inclusion $\mathcal{E}'\subset\mathbb F(S,\mathfrak{A})$ that extends the inclusion $\mathcal{E}\subset\mathbb F(S,\mathfrak{A})$.
\end{lemma}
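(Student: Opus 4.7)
The plan is to exhibit an explicit inclusion map $\iota : \mathcal{E}' \to \mathbb{F}(S,\mathfrak{A})$ and verify the two required properties: that it is injective and that it restricts to the identity on $\mathcal{E}$ under the canonical embedding $\widehat{\,\cdot\,} : \mathcal{E} \to \mathcal{E}'$. The natural candidate, motivated by the reproducing property $f(s) = \langle k_s, f\rangle$, is
\[
\iota(\varphi)(s) := \varphi(k_s)^*, \qquad \varphi \in \mathcal{E}',\ s \in S.
\]
This is well-defined because $k_s \in \mathcal{E}$ for each $s \in S$, so $\varphi(k_s) \in \mathfrak{A}$.

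First I would check the compatibility with the embedding $\mathcal{E} \hookrightarrow \mathcal{E}'$. For $f \in \mathcal{E}$, the paper defines $\hat f(y) = \langle f, y\rangle$, so
\[
\iota(\hat f)(s) = \hat f(k_s)^* = \langle f, k_s\rangle^* = \langle k_s, f\rangle = f(s),
\]
which is exactly the original function in $\mathbb{F}(S,\mathfrak{A})$. Thus $\iota$ extends the inclusion $\mathcal{E} \subset \mathbb{F}(S,\mathfrak{A})$. This computation also explains why the adjoint in the definition of $\iota$ is essential; without it one would recover $f(s)^*$, not $f(s)$, and the extension property would fail.

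Next I would verify injectivity of $\iota$, so that it is genuinely an inclusion. Suppose $\iota(\varphi) = 0$; then $\varphi(k_s)^* = 0$, hence $\varphi(k_s) = 0$, for every $s \in S$. Since $\varphi$ is $\mathfrak{A}$-linear, $\varphi$ vanishes on the $\mathfrak{A}$-linear span of $\{k_s : s \in S\}$, which is dense in $\mathcal{E}$ by condition (iii) of Definition \ref{mnm2}. Because $\varphi$ is bounded, it vanishes on all of $\mathcal{E}$, so $\varphi = 0$.

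There is no significant obstacle here; the statement is essentially a bookkeeping remark that pins down how an abstract bounded $\mathfrak{A}$-linear functional becomes a concrete $\mathfrak{A}$-valued function on $S$. The only point to watch is the conjugation convention, which is forced by the paper's convention that the inner product is $\mathfrak{A}$-linear in the second variable (so that $\hat f(k_s) = \langle f, k_s\rangle = f(s)^*$ rather than $f(s)$). Density of the span of the kernels is what converts injectivity into a one-line argument, so this lemma is really just an application of Definition \ref{mnm2}(iii).
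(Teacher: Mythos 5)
Your proof is correct and follows essentially the same route as the paper: identify $\varphi\in\mathcal{E}'$ with the function $s\mapsto\varphi(k_s)$ (up to an adjoint) and use density of the $\mathfrak{A}$-linear span of $\{k_s:s\in S\}$ for injectivity. Your insertion of the adjoint is in fact a small refinement: the paper simply sets $F(s):=F(k_s)$, which under the embedding $\hat f(y)=\langle f,y\rangle$ recovers $f(s)^*$ rather than $f(s)$, so your conjugation makes the claim that the map extends the inclusion $\mathcal{E}\subset\mathbb F(S,\mathfrak{A})$ literally exact.
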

\begin{proof}
Suppose that $F\in\mathcal{E}'$. Set $F(s):=F(k_s)$. This gives us a map $\mathcal{E}'\to\mathbb F(S,\mathfrak{A})$. To show that this map is faithful, suppose that $F(s)=0$ for any $s\in S$. Then $F$ vanishes on a dense subset of $\mathcal{E}$, and hence is zero.
\end{proof}

It is clear that if $K$ is a kernel of the form $\eqref{2}$, then it is a positive definite kernel; see \cite{GMX} for the Kolmogorov decomposition at the setting of Hilbert $C^*$-modules. Hence, employing Theorem \ref{0121}, there exists a Hilbert $\mathfrak{A}$-module consisting of $\mathfrak{A}$-valued functions on $S$ such that $K$ is its reproducing kernel. This is a motivation for the following definition.

\begin{definition}
Let $K:S\times S\to \mathfrak{A}$ be the positive definite kernel defined by
\[
 K(s,t)=\sum_{\alpha\in I} e_\alpha(s)^*e_\alpha(t) \quad (s,t\in S),
\]
where $\{e_\alpha\}_{\alpha\in I}$ is a family in $\mathbb{F}(S,\mathfrak{A})$ with the property that $\sum_{\alpha\in I} e_\alpha(s)^*e_\alpha(s)$ converges in $\mathfrak{A}$. Then $K$ is called the Papadakis kernel, and the Hilbert $\mathfrak{A}$-module consisting of $\mathfrak{A}$-valued functions on $S$, given by Theorem \ref{0121}, is called the Papadakis Hilbert $\mathfrak{A}$-module.
\end{definition}
 
Let $K$ be a Papadakis kernel on $S$ for some family $\{e_\alpha\}_{\alpha\in I}\subseteq \mathbb{F}(S,\mathfrak{A})$ and let $\mathcal{E}$ be the associated Papadakis Hilbert $C^*$-module. The following theorem shows that the multiplication of an element of $\mathbb{F}(S,\mathfrak{A})$ satisfying suitable conditions and that $e_\alpha \ (\alpha\in I)$ is an element of $\mathcal{E}$. Note that $e_\alpha^*:S\to\mathfrak{A}$ is defined by $e_\alpha^*(s)=e_\alpha(s)^*$ for all $\alpha\in I$ and $s\in S$. To achieve our next result, we mimic some ideas of \cite{zaf}.

\begin{definition}
A subset $P$ of $S$ is said to be a set of uniqueness of $\mathcal{E}\subseteq\mathbb{F}(S,\mathfrak{A})$ if the $\mathfrak{A}$-linear span of $k_p$, $p\in P$, is dense in $\mathcal{E}$. In this case, we write $P\in \mathbb{U}(\mathcal{E})$.
\end{definition}
Note that if $f,g\in\mathcal{E}$ with $f(p)=g(p)$ for any $p\in P$, then $f=\sum_{p\in P} k_p a_p$ and $g=\sum_{p\in P} k_p b_p$, where $a_p,b_p\in\mathfrak{A}$. Then $f=g$.

\begin{theorem}
Let $K$ be the Papadakis kernel for some family $\{e_\alpha\}_{\alpha\in I}\subseteq \mathbb{F}(S,\mathfrak{A})$ and let $\mathcal{E}$ be the associated Papadakis Hilbert $C^*$-module. Let $e_\alpha\in \mathcal{M}(\mathcal{E}',\mathcal{E})$ and let $X\in \mathbb{U}(\mathcal{E})$. Assume that $\psi: X\to \mathcal{Z}(\mathfrak{A})$ is a function and that $c>0$ is such that 
\begin{align}\label{0202}
\sum_{i,j=1}^n a_i^*K(x_i,x_j)(c^2-\psi(x_i)^*\psi(x_j))a_j\geq 0,
\end{align}
for all $n\in \mathbb{N}, x_1, x_2, \ldots, x_n\in X$, and $a_1, a_2,\ldots, a_n\in \mathfrak{A}$. Then for every $\alpha\in I$, there is a unique function $\varphi_\alpha\in \mathcal{E}$ such that 
\begin{align*}
\varphi_\alpha(x)&=e_\alpha(x)\psi(x), \qquad x\in X
\end{align*}
or, equivalently,
\begin{align*}
\varphi_\alpha=e_\alpha\psi
\end{align*}
and
\[
e_\alpha \varphi_\beta =e_\beta \varphi_\alpha,
\]
for all $\alpha, \beta\in I$. Furthermore, if ${\rm ran}(e_\alpha)\subseteq \mathcal{Z}(\mathfrak{A})$ and $K(s,s)$ is invertible for every $s\in S$, then
\begin{align*}
|\varphi_\alpha(s)|&\leq c|e_\alpha(s)|, (s\in S).
\end{align*}
\end{theorem}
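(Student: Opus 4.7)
The plan is to use condition (4.1) to produce a bounded $\mathfrak{A}$-linear operator $V$ on $\mathcal{E}$ that encodes multiplication by $\psi$ on the reproducing kernels at points of $X$, then pass to the Banach transpose, combine with the hypothesis $e_\alpha\in\mathcal M(\mathcal E',\mathcal E)$, and land inside $\mathcal E$. Uniqueness will come for free from $X\in\mathbb U(\mathcal E)$, the identity $e_\alpha\varphi_\beta=e_\beta\varphi_\alpha$ will follow by checking both sides on $X$ and invoking uniqueness, and the final pointwise bound will come from the reproducing formula together with $\|V\|\le c$ and invertibility of $K(s,s)$.

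First I would build $V$. Let $\mathcal E_X:=\mathrm{span}_{\mathfrak A}\{k_x:x\in X\}$, which is norm-dense in $\mathcal E$ because $X\in\mathbb U(\mathcal E)$, and set $V(\sum_i k_{x_i}a_i):=\sum_i k_{x_i}\psi(x_i)a_i$. Centrality of $\psi$ allows the rewriting
\[
\langle V\xi,V\xi\rangle=\sum_{i,j}a_i^*\psi(x_i)^*K(x_i,x_j)\psi(x_j)a_j=\sum_{i,j}a_i^*K(x_i,x_j)\psi(x_i)^*\psi(x_j)a_j,
\]
so (4.1) immediately gives $\langle V\xi,V\xi\rangle\le c^2\langle\xi,\xi\rangle$ in $\mathfrak A^+$; this makes $V$ well defined on $\mathcal E_X$ and extends it to a bounded operator on $\mathcal E$ with $\|V\|\le c$. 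The Banach transpose $V^\dagger:\mathcal E'\to\mathcal E'$, $(V^\dagger F)(g):=F(Vg)$, inherits $\|V^\dagger\|\le c$, and Lemma \ref{vlad1} lets me view each $F\in\mathcal E'$ as a function on $S$ via $F(s):=F(k_s)$. For $s\in X$ one then has $(V^\dagger F)(s)=F(k_s\psi(s))=F(s)\psi(s)$, so $V^\dagger$ realizes right multiplication by $\psi$ on restrictions to $X$.

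Second, I would realize each $e_\beta$ as a functional $E_\beta\in\mathcal E'$ via $E_\beta(k_s):=e_\beta(s)$ extended by right $\mathfrak A$-linearity; the bound $\|E_\beta\|\le 1$ follows from the Papadakis decomposition, which makes $K-e_\beta^*e_\beta=\sum_{\gamma\ne\beta}e_\gamma^*e_\gamma$ a positive definite kernel and hence $\sum a_i^*e_\beta(s_i)^*e_\beta(s_j)a_j\le\sum a_i^*K(s_i,s_j)a_j$ in $\mathfrak A^+$. The transported functional $V^\dagger E_\alpha\in\mathcal E'$ then has restriction $e_\alpha\psi$ on $X$; to promote it into $\mathcal E$, I would use the multiplier hypothesis $e_\alpha\in\mathcal M(\mathcal E',\mathcal E)$, obtaining a function $\varphi_\alpha\in\mathcal E$ that agrees on $X$ with $e_\alpha\psi$. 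Uniqueness is immediate from $X\in\mathbb U(\mathcal E)$, which also gives the equivalent form $\varphi_\alpha=e_\alpha\psi$ and the symmetry $e_\alpha\varphi_\beta=e_\beta\varphi_\alpha$ after checking both sides on $X$ (both restrict to $e_\alpha(x)e_\beta(x)\psi(x)$ by centrality of $\psi(x)$). For the estimate in the central, $K(s,s)$-invertible case, I would write $\varphi_\alpha(s)=\langle k_s,\varphi_\alpha\rangle$, insert the factorization of $\varphi_\alpha$ through $V$, and isolate $\psi(s)$ by multiplying through by $K(s,s)^{-1}$, so that the operator-norm bound $\|V\|\le c$ transfers to the pointwise bound $|\varphi_\alpha(s)|^2\le c^2|e_\alpha(s)|^2$ in $\mathfrak A^+$.

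The step I expect to be the genuine obstacle is the promotion from $\mathcal E'$ to $\mathcal E$ in the third paragraph: condition (4.1) produces boundedness of $V$ as an operator on $\mathcal E$ but not of the formal functional ``multiplication by $\psi$'' on $\mathcal E$, so one cannot simply extend $\psi$ to an element of $\mathcal E'$ by itself. The resolution must combine the multiplier hypothesis $e_\alpha\in\mathcal M(\mathcal E',\mathcal E)$ with the Papadakis frame structure $K=\sum e_\beta^*e_\beta$; it is the interaction of these two ingredients, rather than either alone, that upgrades $V^\dagger E_\alpha$ into a function lying in $\mathcal E$.
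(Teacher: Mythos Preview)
Your construction is essentially the paper's, repackaged: your $V^\dagger E_\alpha$ is literally the functional the paper defines directly by $\varphi_\alpha\bigl(\sum_i k_{x_i}a_i\bigr)=\sum_i e_\alpha(x_i)\psi(x_i)a_i$, and the bound you obtain from $\|V\|\le c$ and $\|E_\alpha\|\le 1$ is the same bound $\varphi_\alpha(b)^*\varphi_\alpha(b)\le c^2\langle b,b\rangle$ the paper reads off from the expanded form of \eqref{0202}. The operator $V$ is a harmless detour. Concerning the obstacle you flag, the paper does not in fact ``promote'' $\varphi_\alpha$ into $\mathcal E$: it leaves $\varphi_\alpha\in\mathcal E'$ and views it as a function on $S$ via Lemma~\ref{vlad1}. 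The hypothesis $e_\alpha\in\mathcal M(\mathcal E',\mathcal E)$ is used only to make the products $e_\alpha\varphi_\beta$ land in $\mathcal E$, so that equality on $X$ forces equality on $S$ by the set-of-uniqueness property; so your expectation that the multiplier hypothesis is what bridges $\mathcal E'$ and $\mathcal E$ is correct, but it is applied to the products $e_\alpha\varphi_\beta$, not to $\varphi_\alpha$ itself.

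The genuine gap in your plan is the final pointwise estimate. You propose to ``isolate $\psi(s)$'' and feed in $\|V\|\le c$, but $\psi$ is only defined on $X$, and for $s\notin X$ you have no explicit formula for $Vk_s$; the naive route yields only $|\varphi_\alpha(s)|^2\le c^2 K(s,s)$, not $c^2|e_\alpha(s)|^2$. The paper's argument is different and uses all of the structure at once: from the expanded form of \eqref{0202} one actually gets the \emph{summed} inequality $c^2\langle g,g\rangle\ge\sum_\beta|\varphi_\beta(g)|^2$ for all $g\in\mathcal E$; setting $g=k_s$ gives $c^2K(s,s)\ge\sum_\beta|\varphi_\beta(s)|^2$. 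Now multiply by the central element $|e_\alpha(s)|^2$, use the already-established identity $e_\alpha\varphi_\beta=e_\beta\varphi_\alpha$ on $S$ together with centrality of $e_\alpha(s)$ to rewrite $\sum_\beta|e_\alpha(s)\varphi_\beta(s)|^2=\sum_\beta|e_\beta(s)\varphi_\alpha(s)|^2=K(s,s)|\varphi_\alpha(s)|^2$, and cancel the invertible $K(s,s)$. Your single-$\alpha$ bound via $\|V\|\le c$ loses precisely the coupling across $\beta$ that makes this re-summation possible.
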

\begin{proof}
Inequality \eqref{0202} can be restated as follows:
\begin{align}\label{0123}
c^2\left\langle \sum_{i=1}^nk_{x_i}a_i,\sum_{j=1}^nk_{x_j}a_j\right\rangle &\geq 
\sum_{\alpha\in I}\left(\sum_{i=1}^n e_\alpha(x_i)a_i\psi(x_i)\right)^*\left(\sum_{j=1}^n e_\alpha(x_j)a_j\psi(x_j)\right)\cr
&=\sum_{\alpha\in I}\left|\sum_{i=1}^n e_\alpha(x_i)a_i\psi(x_i)\right|^2
\end{align}
for every $n\in \mathbb{N}, x_1, x_2, \ldots, x_n\in X$ and $a_1, a_2, \ldots, a_n\in \mathfrak{A}$. Put 
\[
D=\left\{\sum_{i=1}^nk_{x_i}a_i:n\in\mathbb{N},x_1, x_2, \ldots, x_n\in X, a_1, a_2, \ldots, a_n\in \mathfrak{A}\right\}.
\]
For every $\alpha\in I$, we define $\varphi_\alpha:D\to \mathfrak{A}$ by 
\begin{align}\label{023}
\varphi_\alpha\left(\sum_{i=1}^nk_{x_i}a_i\right)=\sum_{i=1}^n e_\alpha(x_i)a_i\psi(x_i),
\end{align}
where $n\in \mathbb{N}, x_1, x_2, \ldots, x_n\in X$, and $a_1, a_2,\ldots, a_n\in \mathfrak{A}$. Set $b:=\sum_{i=1}^n k_{x_i}a_i$. From \eqref{0123}, we conclude that 
\begin{equation*}
c^2\langle b,b\rangle\geq \varphi_\alpha(b)^*\varphi_\alpha(b)
\end{equation*}
for every $\alpha\in I$. Hence $\varphi_\alpha$ is a well-defined bounded $\mathfrak{A}$-linear map. Since $X\in \mathbb{U}(\mathcal{E})$, the set $D$ is dense in $\mathcal{E}$. Hence, we can extend $\varphi_\alpha$ to $\mathcal{E}$. For simplicity, we denote it by the same $\varphi_\alpha$, so that $\varphi_\alpha\in\mathcal{E}'$. 
From \eqref{0123}, we reach
\begin{align}\label{14725}
c^2|g|^2\geqslant \sum_{\alpha\in I}|\varphi_\alpha(g)|^2 \quad (g\in \mathcal{E}).
\end{align}
Utilizing Lemma \ref{vlad1} and \eqref{023}, we arrive at
\begin{align*}
\varphi_\alpha(x)=\varphi_\alpha(k_x)=e_\alpha(x)\psi(x) \quad (x\in X, \alpha\in I).
\end{align*}
Then
\begin{align}\label{147}
e_\alpha(x)\varphi_\beta(x)&=\varphi_\beta(x)e_\alpha(x) \quad (x\in X,\ \alpha,\beta\in I).
\end{align}
Since $e_\alpha\in \mathcal{M}(\mathcal{E})$ and $X\in \mathbb{U}(\mathcal{E})$, from \eqref{147}, we infer that 
\begin{align*}
e_\alpha(s)\varphi_\beta(s)&=\varphi_\beta(s)e_\alpha(s), \quad (s\in S).
\end{align*}
Now, fix $\alpha\in I$ and $s\in S$. Putting $g=k_s$ in \eqref{14725}, we arrive at 
\begin{align*}
c^2K(s,s)\geq \sum_{\beta\in I}\varphi_\beta(s)^*\varphi_\beta(s).
\end{align*}
 Since ${\rm ran}(e_\alpha)\subseteq \mathcal{Z}(\mathfrak{A})$, we have 
\begin{align*}
c^2K(s,s)e_\alpha(s)^*e_\alpha(s)&\geq\sum_{\beta \in I}(e_\alpha(s)\varphi_\beta(s))^*e_\alpha(s)\varphi_\beta(s)\\
&=\sum_{\beta\in I}e_\beta(s)^*\varphi_\alpha(s)^*e_\beta(s)\varphi_\alpha(s)\\
&=|\varphi_\alpha(s)|^2K(s,s).
\end{align*}
Now, the invertibility of $K(s,s)$ entails that $|\varphi_\alpha(s)|\leq c |e_\alpha(s)|$. 
\end{proof}

\medskip

\noindent \textit{Conflict of Interest Statement.} On behalf of all authors, the corresponding author states that there is no conflict of interest.\\

\noindent\textit{Data Availability Statement.} Data sharing not applicable to this article as no datasets were generated or analysed during the current study.

\medskip

\end{document}